\title{Mixture weights optimisation for Alpha-Divergence Variational Inference}
\newtheorem{thm}{Theorem}
\crefname{theorem}{Theorem}{Theorems}
\Crefname{theorem}{Theorem}{Theorems}
\crefname{corollary}{Corollary}{Corollaries}
\Crefname{corollary}{Corollary}{Corollaries}
\Crefname{chapter}{Chapter}{Chapters}
\crefname{chapter}{Chapter}{Chapters}
\newtheorem{ex}{Example}
\crefname{example}{example}{examples}
\Crefname{example}{Example}{Examples}
\newaliascnt{lemma}{index}
\newtheorem{lem}[lemma]{Lemma}
\crefname{lemma}{Lemma}{lemmas}
\Crefname{Lemma}{Lemma}{Lemmas}
\newaliascnt{proposition}{index}
\newtheorem{prop}[proposition]{Proposition}
\crefname{proposition}{Proposition}{Propositions}
\Crefname{Proposition}{Proposition}{Propositions}
\newaliascnt{corollary}{index}
\crefname{corollary}{corollary}{corollaries}
\Crefname{Corollary}{Corollary}{Corollaries}
\newaliascnt{definition}{index}
\crefname{definition}{definition}{definitions}
\Crefname{Definition}{Definition}{Definitions}
\newaliascnt{remark}{index}
\crefname{remark}{remark}{remarks}
\Crefname{remark}{Remark}{Remarks}
\newenvironment{enumerateList}{ \begin{enumerate}[label=(\roman*),wide=0pt, labelindent=\parindent]}{\end{enumerate}}
\newenvironment{hyp}[1]{
\begin{enumerate}[label=\textbf{\sf(#1\arabic*)},resume=hyp#1]\begin{sf}}
{\end{sf}\end{enumerate}}
\crefname{hyp}{}{ass}
\Crefname{hyp}{}{Ass}
\newenvironment{hypD}[1]{
\begin{enumerate}[label=\textbf{\sf(#1\arabic*)},resume=hyp#1]\begin{sf}}
{\end{sf}\end{enumerate}}
\title{Mixture weights optimisation for Alpha-Divergence Variational Inference}
\author{
  Kam\'elia~Daudel \\
LTCI, Télécom Paris \\
Institut Polytechnique de Paris, France \\
\texttt{kamelia.daudel@gmail.com} \\
 \And
Randal~Douc \\
SAMOVAR, Télécom SudParis \\
Institut Polytechnique de Paris, France \\
\texttt{randal.douc@telecom-sudparis.eu}}
\newcommandx{\admiss}[1][1=f]{\mathsf{A}_{#1}}
\newcommandx{\amu}[1][1=\mu_n]{a_{#1}}
\newcommandx{\hamu}[1][1=\mu_n]{\hat a_{#1}}
\newcommand{\arginf}{\mathrm{arginf}}
\newcommand{\argmax}{\mathrm{argmax}}
\newcommandx{\aux}[3][1=\mu, 2=\cte, 3=\alpha]{h_{#1, #2}^{(\alpha)}}
\newcommandx{\binfty}[1][1=\alpha]{|B|_{\infty, #1}}
\newcommandx{\balpha}[1][1=\alpha]{|b|_{#1}}
\newcommandx{\blbd}[2][1=(\lbd{}), 2=j]{ {b_{#2, \alpha}{#1}}}
\newcommandx{\bmuf}[2][1=\mu, 2=\alpha]{ {b_{#1, #2}}}
\newcommandx{\bmufk}[2][1=\mu, 2=\alpha]{\hat{b}_{#1, #2, M}}
\newcommandx{\couple}[2][1=\PQ, 2=\PP]{(#1 || #2)}
\newcommand{\Cov}{\mathbb{C}\mathrm{ov}}
\newcommand{\cte}{\kappa}
\newcommandx{\cteaux}[1][1=\alpha]{L_{#1, 4}}
\newcommandx{\cteinf}[1][1=\alpha]{L_{#1, 2}}
\newcommandx{\ctemono}[1][1=\alpha]{L_{#1, 1}}
\newcommandx{\ctestar}[1][1=\alpha]{L_{#1, 5}}
\newcommandx{\ctesup}[1][1=\alpha]{L_{#1, 3}}
\newcommand{\data}{\mathscr{D}}
\newcommandx{\diverg}[1][1=\alpha]{D_{#1}}
\newcommandx{\divergR}[1][1=\alpha]{D^{(\mathrm{AR})}_{#1}}
\newcommandx{\Domain}[1][1=\alpha]{\mathrm{Dom}_{#1}}
\newcommand{\eqdef}{:=}
\newcommand{\eqsp}{\;}
\newcommandx{\falpha}[1][1=\alpha]{f_{#1}}
\newcommandx{\aei}[1][1=\alpha]{$(#1, \Gamma)$-}
\newcommandx{\GammaAlpha}[1][1=\alpha]{ \Gamma}
\renewcommand{\geq}{\geqslant}
\newcommandx{\gmuf}[1][1=\mu]{ {g_{#1}}}
\newcommandx{\iteration}[1][1=\alpha]{\mathcal{I}_{#1}}
\newcommandx{\iterationK}[1][1=\alpha]{\hat{\mathcal{I}}_{#1, M}}
\newcommandx{\lbd}[2][1=]{
    \ifthenelse{\equal{#1}{}}
    {{\boldsymbol{\lambda}_{#2}}}
    {\lambda_{#1,#2}}
    }
\newcommandx{\lbdp}[2][1=]{
    \ifthenelse{\equal{#1}{}}
    {{\boldsymbol{\lambda'_{#2}}}}
    {\lambda'_{#1,#2}}
    }
\newcommandx{\lbdpp}[2][1=]{
    \ifthenelse{\equal{#1}{}}
    {{\boldsymbol{\lambda''_{#2}}}}
    {\lambda''_{#1,#2}}
    }
\renewcommand{\leq}{\leqslant}
\newcommand{\lr}[1]{\left(#1 \right)}
\newcommand{\lrb}[1]{\left[#1 \right]}
\newcommand{\lrc}[1]{\left\{#1 \right\}}
\newcommand{\lrcb}[1]{\left\{#1 \right\}}
\newcommand{\meas}[1]{\mathrm{M}_{#1}}
\newcommandx{\mixture}[2][1=(y), 2=\lbd{}]{q_{#2, \Theta}#1}
\newcommand{\muf}{{\mu^\star}}
\newcommandx{\mulbd}[1][1=\lbd{}]{{\mu_{#1, \Theta}}}
\newcommandx{\norm}[2][1=\infty]{\|#2\|_{#1}}
\newcommand{\nset}{\mathbb N}
\newcommand{\nstar}{\mathbb{N}^\star}
\newcommand{\PE}{\mathbb E}
\newcommandx{\posterior}[1][1=y]{p(#1|\mathscr{D})}
\newcommand{\PP}{\mathbb P}
\newcommand{\PQ}{\mathbb Q}
\newcommandx{\Psif}[1][1=\alpha]{\Psi_{#1}}
\newcommandx{\PsifAR}[1][1=\alpha]{\Psi_{#1}^{AR}}
\newcommandx{\respa}[2][1=y, 2=\lbd{}]{\hat\gamma_{j}^{(t)} (#1; #2)}
\newcommandx{\respat}[3][1=y, 2=\alpha, 3=j]{\gamma_{#3, #2}^{t} (#1)}
\renewcommand{\rho}{\varrho}
\newcommand{\rmd}{\mathrm d}
\newcommand{\Rset}{\mathbb{R}}
\newcommand{\rset}{\mathbb{R}}
\newcommand{\simplex}{\mathcal{S}}
\newcommand{\tgamma}{{\tilde{\Gamma}}}
\newcommandx{\thetat}[2][1=j, 2=t]{\theta_{#1,#2}}
\newcommandx{\thetav}[2][1=]{
    \ifthenelse{\equal{#1}{}}
    {{\boldsymbol{\theta}_{#2}}}
    {\lambda_{#1,#2}}
    }
\newcommand{\Tset}{{\mathsf{T}}}
\newcommand{\Tsigma}{\mathcal{T}}
\newcommand{\Yset}{\mathsf Y}
\newcommand{\Ysigma}{\mathcal Y}
\newcommand{\Var}{\mathbb{V}\mathrm{ar}}
\begin{document}

\maketitle

\begin{abstract}%
This paper focuses on gradient-based Variational Inference for $\alpha$-divergence minimisation. More precisely, we are interested in studying algorithms making it possible to optimise the mixture weights of any given mixture model, without any information on the underlying distribution of its mixture components parameters. The Power Descent is one such algorithm and we establish in our work its
convergence towards the optimal mixture weights when $\alpha <1$ under alleviated assumptions. We also investigate the link between Power Descent and Entropic Mirror Descent: this allows us to introduce the Renyi Descent, for which we prove an $O(1/N)$ convergence rate. We then provide some numerical experiments to illustrate the behavior of these two algorithms in practice.
\end{abstract}

\section{Introduction}

Bayesian Inference involves being able to compute or sample from the posterior density. For many useful models, the posterior density can only be evaluated up to a normalisation constant and we must resort to approximation methods.

One major category of approximation methods is Variational Inference, a wide class of optimisation methods which introduce a simpler density family $\mathcal{Q}$ and use it to approximate the posterior density (see for example Variational Bayes \citep{Jordan1999, beal.phd} and Stochastic Variational Inference \citep{JMLR:v14:hoffman13a}). The crux of these methods consists in being able to find the best approximation of the posterior density among the family $\mathcal{Q}$ in the sense of a certain divergence, most typically the Kullback-Leibler divergence. However, The Kullback-Leibler divergence is known to have some undesirable properties (e.g posterior overestimation/underestimation \citep{divergence-measures-and-message-passing}) and as a consequence, the $\alpha$-divergence \citep{ZhuRohwer, zhu-rohwer-alpha-div} and Renyi's $\alpha$-divergence \citep{renyi1961, 2012arXiv1206.2459V} have gained a lot of attention recently as a more general alternative \citep{2015arXiv151103243H, 2016arXiv160202311L, NIPS2017_6866, NIPS2017_14e422f0, NIPS2017_7093, NIPS2018_7816, biasedDomke2020, geffner2020difficulty, daudel2020infinitedimensional, daudel2021monotonic, dhaka2021challenges}.

Noticeably, \cite{daudel2020infinitedimensional} introduced the \aei descent, a general family of gradient-based algorithms that are able to optimise the \textit{mixture weights} of mixture models by $\alpha$-divergence minimisation, without any information on the underlying distribution of its mixture components parameters. The benefit of these types of algorithms is that they allow, in an Sequential Monte Carlo fashion \citep{articleDoucet}, to select the mixture components according to their overall importance in the set of component parameters. From there, one is able to optimise the weights and the components parameters alternatively \citep{daudel2020infinitedimensional}. The \aei descent framework recovers the Entropic Mirror Descent algorithm (corresponding to $\GammaAlpha(v) = e^{-\eta v}$ with $\eta > 0$) and includes the Power Descent, an algorithm defined for all $\alpha \in \Rset \setminus \lrcb{1}$ and all $\eta > 0$ that sets $\GammaAlpha(v) = [(\alpha-1)v + 1]^{\eta/(1-\alpha)}$. Although these two algorithms are linked to one another from a theoretical perspective through the \aei descent framework, numerical experiments in \cite{daudel2020infinitedimensional} showed that the Power Descent outperforms the Entropic Mirror Descent when $\alpha <1$ as the dimension increases.

Nonetheless, the global convergence of the Power Descent algorithm when $\alpha <1$, as stated in \cite{daudel2020infinitedimensional}, is subjected to the condition that the limit exists. Furthermore, even though the convergence towards the global optimum is derived, there is no convergence rate available for the Power Descent when $\alpha <1$. 
While there is no general rule yet on how to select the value of $\alpha$ in practice, the case $\alpha < 1$ has the advantage that it enforces a \textit{mass-covering} property, as opposed to the \textit{mode-seeking} property exhibited when $\alpha \geq 1$ (\cite{divergence-measures-and-message-passing} and \cite{daudel2020infinitedimensional}) and which often may lead to posterior variance underestimation. We are thus interested in studying Variational Inference methods for optimising the mixture weights of mixture models when $\alpha <1$. After recalling the basics of the Power Descent algorithm in \Cref{sec:optimPb}, we make the following contributions in the paper:
\begin{itemize}[label=$\bullet$ ,wide=0pt, labelindent=\parindent]
\item In \Cref{sec:cv}, we derive the full convergence proof of the Power Descent algorithm towards the optimal mixture weights when $\alpha < 1$ (\Cref{thm:limit}).

\item Since the $\alpha$-divergence becomes the traditional forward Kullback-Leibler when $\alpha \to 1$, we first bridge in \Cref{sec:EMD} the gap between the cases $\alpha <1$ and $\alpha > 1$ of the Power Descent: we obtain that the Power Descent recovers an Entropic Mirror Descent performing forward Kullback-Leibler minimisation (\Cref{lem:extenstionAlpha1}). We then keep on investigating the connections between the Power Descent and the Entropic Mirror Descent by considering first-order approximations. In doing so, we are able to go beyond the \aei descent framework and to introduce an algorithm closely-related to the Power Descent that we call the \textit{Renyi Descent} and that is proved in \Cref{thm:Renyi} to converge at an $O(1/N)$ rate towards its optimum for all $\alpha \in \rset$.

\item Finally, we run some numerical experiments in \Cref{sec:numerical} to compare the behavior of the Power Descent and the Renyi Descent altogether, before discussing the potential benefits of one approach over the other.
\end{itemize}

\section{Background}
\label{sec:optimPb}

We start by introducing some notation. Let $(\Yset,\Ysigma, \nu)$ be a measured space, where $\nu$ is a $\sigma$-finite measure on $(\Yset, \Ysigma)$. Assume that we have access to some observed variables $\data$ generated from a probabilistic model $p(\data|y)$ parameterised by a hidden random variable $y \in \Yset$ that is drawn from a certain prior $p_0(y)$. The posterior density of the latent variable $y$ given the data $\data$ is then given by:
$$
p(y | \data) = \frac{p(y, \data)}{p(\data)} = \frac{ p_0(y) p(\data|y)}{p(\data)} \eqsp,
$$
where the normalisation constant $p(\data) = \int_\Yset p_0(y) p(\data|y) \nu(\rmd y)$ is called the {\em marginal likelihood} or {\em model evidence} and is oftentimes unknown.

To approximate the posterior density, the Power Descent considers a variational family $\mathcal{Q}$ that is large enough to contain mixture models and that we redefine now: letting $(\Tset,\Tsigma)$ be a measurable space, $K:(\theta,A) \mapsto \int_A k(\theta,y)\nu(\rmd y)$ be a Markov transition kernel on $\Tset \times \Ysigma$ with kernel density $k$ defined on $\Tset \times \Yset$, the Power Descent considers the following approximating family
$$
\lrc{y \mapsto \int_\Tset \mu(\rmd \theta) k(\theta,y) \eqsp: \eqsp \mu \in \mathsf M} \eqsp,
$$
where $\mathsf{M}$ is a convenient subset of $\meas{1}(\Tset)$, the set of probability measures on $(\Tset, \Tsigma)$. This choice of approximating family extends the typical parametric family commonly-used in Variational Inference since it amounts to putting a prior over the parameter $\theta$ (in the form of a measure) and does describe the class of mixture models when $\mu$ is a weighted sum of Dirac measures.

\paragraph{Problem statement} Denote by $\PP$ the probability measure on $(\Yset, \Ysigma)$ with corresponding density $p(\cdot|\data)$ with respect to $\nu$ and for all $\mu\in\meas{1}(\Tset)$, for all $y \in \Yset$, denote $\mu k(y) = \int_\Tset \mu(\rmd \theta) k(\theta, y)$. Furthermore, given $\alpha \in \rset$, let $\falpha$ be the convex function on $(0, +\infty)$ defined by $\falpha[0](u) = u -1 -\log(u)$, $\falpha[1](u) = 1- u + u\log(u)$ and $\falpha(u) = \frac{1}{\alpha(\alpha-1)} \left[  u^\alpha -1 -\alpha(u-1) \right]$ for all $\alpha \in \rset \setminus \lrcb{0,1}$. Then, the $\alpha$-divergence between $\mu K$ and $\PP$ (extended by continuity to the cases $\alpha = 0$ and $\alpha = 1$ as for example done in \cite{alpha-beta-gamma}) is given by
\begin{align*}
\diverg\couple[\mu K][\PP] = \int_\Yset \falpha\left(\frac{\mu k(y)}{p(y|\data)} \right) p(y|\data) \nu(\rmd y) \eqsp,
\end{align*}
and the goal of the Power Descent is to find
\begin{align}\label{eq:optim:alpha}
\arginf_{\mu \in \mathsf{M}} \diverg\couple[\mu K][\PP] \eqsp.
\end{align}
More generally, letting $p$ be any measurable positive function on $(\Yset,\Ysigma)$, the Power Descent aims at solving
\begin{align}\label{eq:GeneralProblem}
\arginf_{\mu \in \mathsf{M}} \Psif(\mu; p)\eqsp,
\end{align}
where for all $\mu \in \meas{1}(\Tset)$, $\Psif(\mu; p) = \int_\Yset \falpha \left(\mu k(y)/ p(y) \right) p(y) \nu (\rmd y)$.
The Variational Inference optimisation problem \eqref{eq:optim:alpha} can then be seen as an instance of \eqref{eq:GeneralProblem} that is equivalent to optimising $\Psif(\mu; p)$ with $p(y) = p(y, \data)$ (see \Cref{subsec:equi}). In the following, the dependency on $p$ in $\Psif$ may be dropped throughout the paper for notational ease when no ambiguity occurs and we now present the Power Descent algorithm.



\paragraph{The Power Descent algorithm.} The optimisation problem \eqref{eq:GeneralProblem} can be solved for all $\alpha \in \rset \setminus \lrcb{1}$ by using the Power Descent algorithm introduced in \cite{daudel2020infinitedimensional} : given an initial measure $\mu_1 \in\meas{1}(\Tset)$ such that $\Psif(\mu_1) < \infty$, $\alpha \in \rset \setminus \lrcb{1}$, $\eta > 0$ and $\cte$ such that $(\alpha-1)\cte \geq 0$, the Power descent algorithm is an iterative scheme which builds the sequence of probability measures $(\mu_n)_{n\in\nstar}$
\begin{equation}
\label{eq:def:mu}
\mu_{n+1}= \iteration (\mu_n)\;,\qquad n\in\nset^\star \eqsp,
\end{equation}
where for all $\mu \in \meas{1}(\Tset)$, the one-step transition $\mu \mapsto \iteration(\mu)$ is given by Algorithm \ref{algo:aei} and where for all $v\in \Domain$,
 $\GammaAlpha(v) = [(\alpha-1)v +1]^{\eta/(1-\alpha)}$ [and $\Domain$ denotes an interval of $\rset$ such that for all $\theta \in \Tset$, all $\mu \in \meas{1}(\Tset)$, $\bmuf(\theta) + \cte$ and $\mu(\bmuf) + \cte \in \Domain$].

\begin{algorithm}[H]
\caption{\em Power descent one-step transition $(\GammaAlpha(v) = [(\alpha-1)v +1]^{\eta/(1-\alpha)})$}
\label{algo:aei}
\begin{enumerate}
\item \underline{Expectation step} : \setlength{\parindent}{1cm} $\bmuf(\theta)= \mathlarger{\int_\Yset} k(\theta,y)  \falpha' \left(\dfrac{\mu k(y)}{p(y)}\right) \nu(\rmd y)$
\item \underline{Iteration step} : \setlength{\parindent}{1cm} $\iteration (\mu)(\rmd \theta)= \dfrac{\mu(\rmd \theta) \cdot \GammaAlpha(\bmuf(\theta)+\cte)}{\mu(\GammaAlpha(\bmuf+\cte))}$
\end{enumerate} \
\end{algorithm}

In this algorithm, $\bmuf$ can be understood as the gradient of $\Psif$. Algorithm \ref{algo:aei} then consists in applying the transform function $\GammaAlpha$ to the translated gradient $\bmuf + \cte$ and projecting back onto the space of probability measures. 

A remarkable property of the Power Descent algorithm, which has been proven in \cite{daudel2020infinitedimensional} (it is a special case of \cite[Theorem 1]{daudel2020infinitedimensional} with $\GammaAlpha(v) = [(\alpha-1)v +1]^{\eta/(1-\alpha)}$), is that under \ref{hyp:positive} as defined below
 \begin{hyp}{A}
  \item \label{hyp:positive} The density kernel $k$ on $\Tset\times\Yset$, the function $p$ on $\Yset$ and
  the $\sigma$-finite measure $\nu$ on $(\Yset,\Ysigma)$ satisfy, for all
  $(\theta,y) \in \Tset \times \Yset$, $k(\theta,y)>0$, $p(y)>0$ and
  $\int_\Yset p(y) \nu(\rmd y)<\infty$.
\end{hyp}
the Power Descent ensures a monotonic decrease in the $\alpha$-divergence at each step for all $\eta \in (0,1]$ (this result is recalled in \Cref{thm:admiss} of \Cref{subsec:lem:admiss} for the sake of completeness). Under the additional assumptions that $\cte > 0$ and
\begin{align}\label{eq:condCV}
\sup_{\theta \in \Tset, \mu \in \meas{1}(\Tset)} |\bmuf| < \infty \quad \mbox{and} \quad \Psif(\mu_1) < \infty \eqsp,
\end{align}
the Power Descent is also known to converge towards its optimal value at an $O(1/N)$ rate  when $\alpha >1$ \citep[Theorem 3]{daudel2020infinitedimensional}. On the other hand, when $\alpha < 1$, the convergence towards the optimum as written in \cite{daudel2020infinitedimensional} holds under different assumptions including
\begin{hyp}{A}
  \item\label{hyp:compactFull}

\begin{enumerate}[label=(\roman*)]
\item \label{item:theta:oneFull}$\Tset$ is a compact metric space and $\Tsigma$ is
  the associated Borel $\sigma$-field;
\item \label{item:theta:twoFull} for all $y \in \Yset$, $\theta \mapsto k(\theta,y)$ is continuous;
\item \label{item:theta:fourFull} we have $\int_\Yset {\sup_{\theta \in \Tset}k(\theta,y)} \times \sup_{\theta' \in \Tset} \lr{\frac{k(\theta',y)}{p(y)}}^{\alpha-1} \nu(\rmd y)<\infty$.
\end{enumerate}
If $\alpha = 0$, assume in addition that $\int_\Yset {\sup_{\theta \in \Tset} \left| \log\lr{\frac{k(\theta,y)}{p(y)}} \right|} p(y) \nu(\rmd y)<\infty$.
\end{hyp}
so that \cite[Theorem 4]{daudel2020infinitedimensional}, that is recalled below under the form of \Cref{thm:repulsive}, states the convergence of the Power Descent algorithm towards the global optimum.
\begin{thm}[{\cite[Theorem 4]{daudel2020infinitedimensional}}]
  \label{thm:repulsive} Assume \ref{hyp:positive} and
  \ref{hyp:compactFull}. Let $\alpha < 1$ and let $\cte \leq 0$. Then, for all
  $\mu\in\meas{1}(\Tset)$,  $\Psif(\mu) < \infty$ and any $\eta > 0$ satisfies $0 <\mu(\GammaAlpha(\bmuf + \cte))<\infty$. Further assume that $\eta \in (0,1]$ and that there exist $\mu_1,\muf \in\meas{1}(\Tset)$ such that the (well-defined) sequence $(\mu_n)_{n\in\nstar}$ defined by \eqref{eq:def:mu} weakly converges to $\muf$ as $n\to\infty$. Finally, denote by $\meas{1, \mu_1}(\Tset)$ the set of probability measures dominated by $\mu_1$. Then the following assertions hold
\begin{enumerate}[label=(\roman*)]
\item \label{item:rep1bis} $(\Psif(\mu_n))_{n\in\nstar}$ is nonincreasing,
\item\label{item:rep1} $\muf$ is a fixed point of $\iteration$,
\item\label{item:rep2} $\Psif(\muf)=\inf_{\zeta \in \meas{1,\mu_1}(\Tset)} \Psif(\zeta)$.
\end{enumerate}
\end{thm}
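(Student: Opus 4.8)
The plan is to handle the three assertions in turn; \ref{item:rep1bis} is essentially a citation, while the real work — and the main obstacle — lies in a single continuity statement that then drives both \ref{item:rep1} and \ref{item:rep2}. For \ref{item:rep1bis}, since $\eta \in (0,1]$ the monotonicity of $(\Psif(\mu_n))_{n \in \nstar}$ is exactly the monotonic-decrease property recalled in \Cref{thm:admiss}, applied along \eqref{eq:def:mu}; nothing more is needed once the iteration is well defined, which the preliminary bound $0 < \mu(\GammaAlpha(\bmuf + \cte)) < \infty$ guarantees. The crucial lemma I would establish is: under \ref{hyp:compactFull}, if $\mu_n \to \muf$ weakly then (a) $\mu_n k(y) \to \muf k(y)$ for every $y$, whence $\bmuf[\mu_n](\theta) \to \bmuf[\muf](\theta)$ for every $\theta$ by dominated convergence in the $y$-integral; (b) the family $\lrc{\bmuf[\mu_n]}_n$ is equicontinuous on the compact $\Tset$, so by Arzel\`a--Ascoli the convergence is uniform and $\theta \mapsto \bmuf[\muf](\theta)$ is continuous; and (c) $\GammaAlpha$ being continuous, $\GammaAlpha(\bmuf[\mu_n] + \cte) \to \GammaAlpha(\bmuf[\muf] + \cte)$ uniformly with continuous limit. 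Proving the equicontinuity in (b) — using the continuity of $\theta \mapsto k(\theta,y)$ from \ref{item:theta:twoFull} together with the integrability bound \ref{item:theta:fourFull} to pass to the limit inside the $y$-integral uniformly in $\theta$ — is where compactness really bites and is the hard part of the whole argument.

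Granting this lemma, \ref{item:rep1} follows quickly. For any bounded continuous $g$ on $\Tset$ write
\begin{align*}
\iteration(\mu_n)(g) = \frac{\mu_n\lr{g \cdot \GammaAlpha(\bmuf[\mu_n] + \cte)}}{\mu_n\lr{\GammaAlpha(\bmuf[\mu_n] + \cte)}} \eqsp.
\end{align*}
Combining the uniform convergence of the integrands with the weak convergence $\mu_n \to \muf$ (via the elementary fact that $\mu_n(f_n) \to \muf(f)$ whenever $f_n \to f$ uniformly with $f$ continuous bounded and $\mu_n \to \muf$ weakly), both numerator and denominator converge, the latter to $\muf(\GammaAlpha(\bmuf[\muf] + \cte)) > 0$. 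Hence $\iteration(\mu_n)(g) \to \iteration(\muf)(g)$, i.e. $\iteration(\mu_n) \to \iteration(\muf)$ weakly; but $\iteration(\mu_n) = \mu_{n+1} \to \muf$, so uniqueness of weak limits yields $\iteration(\muf) = \muf$.

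For \ref{item:rep2} I would first exploit the fixed-point identity: from $\iteration(\muf) = \muf$ one gets $\GammaAlpha(\bmuf[\muf](\theta) + \cte) = \muf(\GammaAlpha(\bmuf[\muf] + \cte)) =: C$ for $\muf$-a.e. $\theta$, and since $\GammaAlpha$ is a strictly monotone bijection for $\alpha<1$ this reads $\bmuf[\muf](\theta) = c$ for $\muf$-a.e. $\theta$, where $c = \muf(\bmuf[\muf])$. As $\Psif$ is convex in $\mu$ (the convex $\falpha$ composed with the affine $\mu \mapsto \mu k(y)$, then integrated) with directional derivative $\zeta(\bmuf[\muf]) - \muf(\bmuf[\muf])$ at $\muf$ towards $\zeta$, optimality over $\meas{1,\mu_1}(\Tset)$ is equivalent to $\bmuf[\muf](\theta) \geq c$ for $\mu_1$-a.e. $\theta$. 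The delicate point is thus to upgrade $\bmuf[\muf] = c$ from $\muf$-a.e. to $\bmuf[\muf] \geq c$ $\mu_1$-a.e.

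To do so, set $A = \lrc{\theta : \bmuf[\muf](\theta) < c}$ and argue by contradiction, assuming $\mu_1(A) > 0$. Iterating Algorithm \ref{algo:aei} gives the explicit density
\begin{align*}
\frac{\rmd \mu_{n+1}}{\rmd \mu_1}(\theta) = \prod_{j=1}^{n} \frac{\GammaAlpha(\bmuf[\mu_j](\theta) + \cte)}{\mu_j(\GammaAlpha(\bmuf[\mu_j] + \cte))} \eqsp.
\end{align*}
For $\theta \in A$, the pointwise convergence $\bmuf[\mu_j](\theta) \to \bmuf[\muf](\theta) < c$ and the convergence of the normalisers $\mu_j(\GammaAlpha(\bmuf[\mu_j] + \cte)) \to C$ (both from the continuity lemma, with $g \equiv 1$) show, using that $\GammaAlpha$ is strictly decreasing for $\alpha < 1$, that each factor tends to $\GammaAlpha(\bmuf[\muf](\theta) + \cte)/C > 1$, so the product diverges to $+\infty$ on $A$. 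Fatou's lemma then forces $\liminf_n \mu_{n+1}(A) = +\infty$, contradicting $\mu_{n+1}(A) \leq 1$; hence $\mu_1(A) = 0$ and the optimality condition holds, giving $\Psif(\muf) \leq \Psif(\zeta)$ for every $\zeta \in \meas{1,\mu_1}(\Tset)$. Finally, continuity of $\Psif$ along $(\mu_n)$ — once more from \ref{hyp:compactFull} and dominated convergence — gives $\Psif(\mu_n) \to \Psif(\muf)$, and since each $\mu_n \in \meas{1,\mu_1}(\Tset)$ this yields $\inf_{\zeta \in \meas{1,\mu_1}(\Tset)} \Psif(\zeta) \leq \Psif(\muf)$, whence the claimed equality.
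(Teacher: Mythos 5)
You should know at the outset that the paper does not prove \Cref{thm:repulsive} at all: it is recalled verbatim from \cite[Theorem 4]{daudel2020infinitedimensional} and used as an imported ingredient in the proof of \Cref{thm:limit}, so there is no in-paper proof to compare your attempt against, and your proposal has to be judged on its own merits. On those merits it looks sound, and I see no genuine gap. Assertion \ref{item:rep1bis} is indeed just \Cref{thm:admiss} along the iterates, although you gloss over the preliminary claims of the theorem ($\Psif(\mu)<\infty$ and $0<\mu(\GammaAlpha(\bmuf+\cte))<\infty$ for every $\mu$ and $\eta>0$); these follow from \ref{hyp:compactFull} by the domination $(\mu k(y)/p(y))^{\alpha-1}\leq \sup_{\theta'}\lr{k(\theta',y)/p(y)}^{\alpha-1}$ and are needed before \Cref{thm:admiss} can be invoked. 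The equicontinuity you single out as the hard part of your continuity lemma does hold: for $\alpha<1$ one has $\lrav{\bmuf[\mu_n](\theta)-\bmuf[\mu_n](\theta'')}\leq \frac{1}{1-\alpha}\int_\Yset \lrav{k(\theta,y)-k(\theta'',y)}\sup_{\theta'}\lr{k(\theta',y)/p(y)}^{\alpha-1}\nu(\rmd y)$, a modulus independent of $n$ which vanishes as $\theta''\to\theta$ by dominated convergence under \ref{hyp:compactFull}, and \ref{item:rep1} then follows exactly as you describe. Where your reconstruction earns its keep is \ref{item:rep2}: note that the closest in-paper result, \Cref{lem:fixed:point:inf} (proved in the appendix for the purposes of \Cref{thm:limit}), only yields the infimum over $\meas{1,\muf}(\Tset)$, i.e.\ over measures dominated by the \emph{fixed point}, which is strictly weaker than the claim over $\meas{1,\mu_1}(\Tset)$ since mass can concentrate along the iterations. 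Your product-density argument is precisely the missing bridge, and it is correct: on $A=\lrc{\theta\in\Tset \eqsp:\eqsp \bmuf[\muf](\theta)<\muf(\bmuf[\muf])}$ (an open, hence measurable set, by continuity of $\bmuf[\muf]$), each factor of $\rmd\mu_{n+1}/\rmd\mu_1$ converges pointwise to $\GammaAlpha(\bmuf[\muf](\theta)+\cte)/\GammaAlpha(\muf(\bmuf[\muf])+\cte)>1$ because the fixed-point identity makes the normalisers converge to $\GammaAlpha(\muf(\bmuf[\muf])+\cte)$ and $\GammaAlpha$ is strictly decreasing when $\alpha<1$; the density therefore blows up on $A$, and Fatou contradicts $\mu_{n+1}(A)\leq 1$ unless $\mu_1(A)=0$. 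The first-order convexity inequality then gives $\Psif(\muf)\leq\Psif(\zeta)$ for all $\zeta\in\meas{1,\mu_1}(\Tset)$, and the reverse inequality follows, as you say, from $\mu_n\in\meas{1,\mu_1}(\Tset)$ together with $\Psif(\mu_n)\to\Psif(\muf)$, which is again dominated convergence under \ref{hyp:compactFull}.
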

The above result assumes there must exist $\mu_1,\muf \in\meas{1}(\Tset)$ such that the sequence $(\mu_n)_{n\in\nstar}$ defined by \eqref{eq:def:mu} weakly converges to $\muf$ as $n\to\infty$, that is it assumes the limit already exists. Our first contribution consists in showing that this assumption can be alleviated when $\mu$ is chosen a weighted sum of Dirac measures, that is when we seek to perform mixture weights optimisation by $\alpha$-divergence minimisation. 

\section{Convergence of the Power Descent algorithm in the mixture case}
\label{sec:cv}

Before we state our convergence result, let us first make two comments on the assumptions from \Cref{thm:repulsive} that shall be retained in our upcoming convergence result.

A first comment is that \ref{hyp:positive} is mild since the assumption that $p(y) > 0$ for all $y \in \Yset$ can be discarded and is kept for convenience \cite[Remark 4]{daudel2020infinitedimensional}. A second comment is that \ref{hyp:compactFull} is also mild and covers \eqref{eq:condCV} as it amounts to assuming that $\bmuf(\theta)$ and $\Psif(\mu)$ are uniformly bounded with respect to $\mu$ and $\theta$. To see this, we give below an example for which \ref{hyp:compactFull} is satisfied.

\begin{ex}\label{ex:thm:limit} Consider the case $\Yset = \rset^d$ with $\alpha \in [0,1)$. Let $r> 0$ and let $\Tset = \mathcal{B}(0,r) \subset \rset^d$. Furtheremore, let $K_{h}$ be a Gaussian transition kernel with bandwidth $h$ and denote by $k_{h}$ its associated kernel density. Finally, let $p$ be a mixture density of two $d$-dimensional Gaussian distributions multiplied by a positive constant $c$ such that $p(y) = c \times \lrb{ 0.5 \mathcal{N}(y; \theta^\star_1, \boldsymbol{I_d}) + 0.5 \mathcal{N}(y; \theta^\star_2 , \boldsymbol{I_d})}$ for all $y \in \Yset$ where $\theta_1^\star, \theta_2^\star \in \Tset$ and $\boldsymbol{I_d}$ is the identity matrix. Then, \ref{hyp:compactFull} holds (see \Cref{sec:exGaussian}).
\end{ex}
Next, we introduce some notation that are specific to the case of mixture models we aim at studying in this section. Given $J \in \nstar$, we introduce the simplex of $\rset^J$:
$$
\simplex_J = \lrc{ \lbd{}= (\lambda_1, \ldots, \lambda_J) \in \rset^J \eqsp : \eqsp \forall j \in \lrcb{1, \ldots , J}, \eqsp \lambda_j \geq 0 \eqsp \mbox{and} \eqsp \sum_{j=1}^J \lambda_j = 1} \eqsp
$$
and we also define $\simplex_J^+ = \lrc{ \lbd{} \in \simplex_J \eqsp : \eqsp \forall j \in \lrcb{1, \ldots , J}, \eqsp \lambda_j > 0}$. We let $\Theta = (\theta_1, \ldots, \theta_J) \in \Tset^J$ be fixed and for all $\lbd{} \in \simplex_J$, we define $\mulbd  \in \meas{1}(\Tset)$ by $\mulbd  = \sum_{j=1}^J \lambda_j \delta_{\theta_j}$.

Consequently, $\mulbd  k(y) = \sum_{j = 1}^J \lambda_j k(\theta_{j}, y)$ corresponds to a mixture model and if we let $(\mu_n)_{n\in\nstar}$ be defined by $\mu_1 = \mulbd $ and \eqref{eq:def:mu}, an immediate induction yields that for every $n\in \nstar$, $\mu_n$ can be expressed as $\mu_n=\sum_{j=1}^J \lbd[j]{n} \delta_{\theta_j}$ where $\lbd{n}=(\lbd[1]{n},\ldots,\lbd[J]{n}) \in \simplex_J$ satisfies the  initialisation $\lbd{1}=\lbd{}$ and the update formula:
\begin{align}\label{eq:iteration:mixture}
\lbd{n+1} = \iteration^{\mathrm{mixt}}(\lbd{n}) \eqsp, \eqsp n\in\nstar \eqsp,
\end{align}
where for all $\lbd{} \in \simplex_J$,
$$
\iteration^{\mathrm{mixt}}(\lbd{}) \eqdef \left( \dfrac{\lambda_{j} \GammaAlpha( \bmuf[\mulbd ](\theta_j) + \cte)}{\sum_{\ell=1}^{J} \lambda_{\ell} \GammaAlpha(\bmuf[\mulbd ](\theta_\ell)+ \cte )} \right)_{1 \leq j \leq J}
$$
with $\GammaAlpha(v) = [(\alpha-1)v + 1]^{\frac{\eta}{1-\alpha}}$ for all $v \in \Domain$. Finally, let us rewrite \ref{hyp:compactFull} in the simplified case where $\mu$ is a sum of Dirac measures, which gives \ref{hyp:compact} below.
\begin{hyp}{A}
  \item\label{hyp:compact}
\begin{enumerate}[label=(\roman*)]
\item \label{item:theta:two} For all $y \in \Yset$, $\theta \mapsto k(\theta,y)$ is continuous;
\item \label{item:theta:four} we have $\mathlarger\int_\Yset \max \limits_{1 \leq j \leq J} k(\theta_j ,y) \times  \max \limits_{1 \leq j' \leq J} \lr{ \frac{k(\theta_{j'},y)}{p(y)}}^{\alpha-1}   \nu(\rmd y)<\infty $.
\end{enumerate}
If $\alpha = 0$, we assume in addition that $\mathlarger\int_\Yset \max \limits_{1 \leq j \leq J}  \left|\log \lr{ \frac{k(\theta_{j},y)}{p(y)}} \right| p(y) \nu(\rmd y)<\infty $.
\end{hyp}
We then have the following theorem, which establishes the full proof of the global convergence towards the optimum for the mixture weights when $\alpha <1$.

\begin{thm}
  \label{thm:limit}
  Assume \ref{hyp:positive} and \ref{hyp:compact}. Let $\alpha < 1$, let $\Theta = (\theta_1, \ldots, \theta_J) \in \Tset^J$ be fixed and let $\cte$ be such that $\cte \leq 0$.  Then for all $\lbd{} \in \simplex_J$, $\Psif(\mulbd ) < \infty$ and for any $\eta > 0$ the sequence $(\lbd{n})_{n\in\nset^\star}$ defined by $\lbd{1} \in \simplex_J$ and \eqref{eq:iteration:mixture} is well-defined. If in addition $(\lbd{1},\eta) \in \simplex_J^+ \times (0,1]$ and $\lrcb{K(\theta_1,\cdot),\ldots,K(\theta_J,\cdot)}$ are linearly independent, then
    \begin{enumerate}[label=(\roman*)]
    \item $(\Psif(\mulbd[\lbd{n}]))_{n\in\nstar}$ is nonincreasing,
    \item the sequence $(\lbd{n})_{n\in\nset^\star}$ converges to some $\lbd{\star} \in \simplex_J$ which is a fixed point of $\iteration^{ \mathrm{mixt}}$,
    \item  $\Psif(\mulbd[\lbd{\star}]) = \inf_{\lbd{}' \in \simplex_J} \Psif(\mulbd[\lbd{}'] )$.
    \end{enumerate}
\end{thm}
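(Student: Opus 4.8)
The overall plan is to convert the infinite-dimensional convergence statement of \Cref{thm:repulsive} into a self-contained finite-dimensional argument on the compact simplex $\simplex_J$, supplying the missing existence of the limit by combining strict convexity with monotonicity. The finiteness $\Psif(\mulbd) < \infty$ and the well-definedness of $(\lbd{n})_n$ for every $\eta > 0$ are immediate in the discrete case: under \ref{hyp:positive} and \ref{hyp:compact} each $\bmuf[\mulbd](\theta_j)$ is finite, so $\mulbd(\GammaAlpha(\bmuf[\mulbd]+\cte)) = \sum_{\ell}\lambda_\ell \GammaAlpha(\bmuf[\mulbd](\theta_\ell)+\cte)$ is a finite positive sum. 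Since $\lbd{1} \in \simplex_J^+$ and $\iteration^{\mathrm{mixt}}$ multiplies coordinate $j$ by the strictly positive factor $\GammaAlpha(\bmuf[\mulbd](\theta_j)+\cte)$, the whole sequence stays in $\simplex_J^+$.

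I would first record that $\lbd{} \mapsto \Psif(\mulbd)$ is differentiable, with gradient coordinate $\partial_{\lambda_j}\Psif(\mulbd) = \bmuf[\mulbd](\theta_j)$ (differentiation under the integral sign, justified by \ref{hyp:compact}), and \emph{strictly} convex. Strictness holds because $\falpha$ is strictly convex on $(0,\infty)$ (its second derivative equals $u^{\alpha-2} > 0$, also at $\alpha=0$) while $\lbd{} \mapsto \mulbd k(y) = \sum_j \lambda_j k(\theta_j,y)$ is affine, so saturating Jensen's inequality would force $\sum_j (\lambda_j - \lambda_j')k(\theta_j,\cdot) = 0$ $\nu$-a.e., which the linear independence of $\{K(\theta_1,\cdot),\ldots,K(\theta_J,\cdot)\}$ forbids unless $\lbd{} = \lbd{}'$. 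Since $\GammaAlpha$ is strictly monotone for $\alpha < 1$, a point is a fixed point of $\iteration^{\mathrm{mixt}}$ precisely when $\bmuf[\mulbd](\theta_j)$ is constant over its support $S$ — exactly the first-order optimality condition for minimising the strictly convex $\Psif$ over the face $\{\lbd{} \in \simplex_J : \lambda_j = 0 \text{ for } j \notin S\}$. Hence every fixed point is the unique minimiser of $\Psif$ over the face spanned by its support, and there are \emph{finitely many} fixed points.

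Assertion (i) is the monotonicity recalled in \Cref{thm:admiss}, valid for $\eta \in (0,1]$, so $(\Psif(\mulbd[\lbd{n}]))_n$ decreases to some $\Psif^\infty$. For (ii), the decisive step is that the increments vanish, $\|\lbd{n+1}-\lbd{n}\| \to 0$. I would argue by contradiction: if $\|\lbd{n_k+1}-\lbd{n_k}\| \geq \epsilon$ along a subsequence, compactness lets me assume $\lbd{n_k} \to \bar\lbd{}$, and continuity of $\iteration^{\mathrm{mixt}}$ gives $\lbd{n_k+1} \to \iteration^{\mathrm{mixt}}(\bar\lbd{}) \neq \bar\lbd{}$, so $\bar\lbd{}$ is not a fixed point; yet continuity of $\Psif$ and convergence of $(\Psif(\mulbd[\lbd{n}]))_n$ force $\Psif(\mulbd[\iteration^{\mathrm{mixt}}(\bar\lbd{})]) = \Psif(\mulbd[\bar\lbd{}]) = \Psif^\infty$, contradicting a \emph{strict} decrease of $\Psif$ off fixed points. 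The same stationarity reasoning shows every limit point of $(\lbd{n})$ is a fixed point. Because the increments vanish and all limit points lie in the finite fixed-point set, the bounded sequence $(\lbd{n})$ converges to a single fixed point $\lbd{\star}$, which is (ii).

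For (iii) I would avoid re-invoking \Cref{thm:repulsive} (whose statement is phrased under the stronger \ref{hyp:compactFull}) and argue directly from convexity. The fixed point $\lbd{\star}$ satisfies $\bmuf[{\mulbd[\lbd{\star}]}](\theta_j) = c$ on its support $S$; to get the complementary inequality $\bmuf[{\mulbd[\lbd{\star}]}](\theta_j) \geq c$ for $j \notin S$, suppose some $j_0 \notin S$ violated it. By strict monotonicity of $\GammaAlpha$ the one-step factor of coordinate $j_0$ would then converge to a limit strictly above $1$, so $\lambda^{(n)}_{j_0}$ would eventually grow geometrically — impossible since $\lambda^{(n)}_{j_0} \to \lambda^\star_{j_0} = 0$. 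With $\bmuf[{\mulbd[\lbd{\star}]}](\theta_j) \geq c$ for all $j$ and equality on $S$, the first-order inequality for the differentiable convex map $\lbd{} \mapsto \Psif(\mulbd)$ gives $\Psif(\mulbd[\lbd{}']) \geq \Psif(\mulbd[\lbd{\star}]) + \sum_j (\lambda_j'-\lambda^\star_j)\bmuf[{\mulbd[\lbd{\star}]}](\theta_j) \geq \Psif(\mulbd[\lbd{\star}])$ for every $\lbd{}' \in \simplex_J$, the last step using $\bmuf[{\mulbd[\lbd{\star}]}](\theta_j) \geq c$ together with $\sum_j\lambda_j' = \sum_j\lambda^\star_j = 1$; this is (iii). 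The main obstacle I anticipate lies in the \emph{strict} decrease used for (ii): \Cref{thm:admiss} only asserts a non-strict decrease, so I would need to revisit the equality case of the convexity inequality behind it and show, using the strict convexity of $\falpha$ and the linear independence assumption, that $\Psif(\mulbd[\iteration^{\mathrm{mixt}}(\lbd{})]) = \Psif(\mulbd)$ forces $\iteration^{\mathrm{mixt}}(\lbd{}) = \lbd{}$; establishing the continuity of $\iteration^{\mathrm{mixt}}$ on all of $\simplex_J$ is the remaining routine ingredient.
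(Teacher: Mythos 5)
Your proof is correct, and for assertions (i)--(ii) it follows essentially the paper's own skeleton: well-definedness and finiteness under \ref{hyp:positive} and \ref{hyp:compact}, monotonicity from \Cref{thm:admiss}, the fact that every limit point of $(\lbd{n})_{n\in\nstar}$ is a fixed point of $\iteration^{\mathrm{mixt}}$ (continuity of $\Psif \circ \iteration^{\mathrm{mixt}} - \Psif$ plus the equality case), finiteness of the fixed-point set via strict convexity and linear independence (the paper packages this through \Cref{lem:fixed:point:inf} and \Cref{lem:fixed:repulsive:prelim:R}, you phrase it as uniqueness of the minimiser over each face --- same content, at most one fixed point per support), and a final topological step, where your vanishing-increments argument and the paper's disjoint ``no-jump'' neighbourhoods $W_\ell$ satisfying \eqref{eq:nojump} are interchangeable. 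The genuine difference is assertion (iii): the paper, having now established the weak convergence that \Cref{thm:repulsive} assumes, invokes \Cref{thm:repulsive}-\ref{item:rep2} as a black box from prior work to identify the limit with the unique global minimiser, whereas you prove global optimality directly by establishing the full KKT conditions at $\lbd{\star}$ --- gradient constant $=c$ on the support, and $\geq c$ off the support via your geometric-growth contradiction, which correctly exploits $\lbd{1}\in\simplex_J^+$ (so every coordinate remains strictly positive) and the continuity of $\lbd{}\mapsto\bmuf[\mulbd](\theta_j)$ --- and then concluding by first-order convexity over the whole simplex. Your route is more self-contained: it never leaves the finite-dimensional setting and sidesteps the fact that \Cref{thm:repulsive} is stated under \ref{hyp:compactFull} rather than \ref{hyp:compact}; the paper's route is shorter but rests on the earlier paper's Theorem 4. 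Finally, the ``main obstacle'' you flag at the end is not one: \Cref{thm:admiss}-\ref{item:mono2Prev}, exactly as recalled in the paper, states that $\Psif\circ\iteration(\mu)=\Psif(\mu)$ if and only if $\mu=\iteration(\mu)$, which is precisely the strict-decrease-off-fixed-points fact your argument for (ii) requires, so no revisiting of the equality case behind \Cref{thm:admiss} is needed.
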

The proof of this result builds on \Cref{thm:repulsive} and \Cref{thm:admiss} and is deferred to \Cref{subsec:prooflimit}. Notice that since $\Psif$ depends on $\lbd{}$ through $\mulbd{} K$ in \Cref{thm:limit}, an identifiably condition was to be expected in order to achieve the convergence of the sequence $(\lbd{n})_{n\in\nset^\star}$. Following \Cref{ex:thm:limit}, this identifiably condition notably holds for $J \leq d$ under the assumption that the $\theta_1, ..., \theta_J$ are full-rank.

We thus have the convergence of the Power Descent under less stringent conditions when $\alpha <1$ and when we consider the particular case of mixture models. This algorithm can easily become feasible for any choice of kernel $K$ by resorting to an unbiased estimator of $(\bmuf[{\mulbd[\lbd{n}]}](\theta_j))_{1\leq j \leq J}$ in the update formula \eqref{eq:iteration:mixture} (see Algorithm \ref{algo:mixture} of \Cref{subsec:aeiprac}).

Nevertheless, contrary to the case $\alpha >1$ we still do not have a convergence rate for the Power Descent when $\alpha <1$. Furthermore, the important case $\alpha \to 1$, which corresponds to performing forward Kullback-Leibler minimisation, is not covered by the Power Descent algorithm. In the next section, we extend the Power Descent to the case $\alpha = 1$. As we shall see, this will lead us to investigate the connections between the Power Descent and the Entropic Mirror Descent beyond the \aei descent framework. As a result, we will introduce a novel algorithm closely-related to the Power Descent that yields an $O(1/N)$ convergence rate when $\mu = \mulbd{}$ and $\alpha <1$ (and more generally when $\mu \in \meas{1}(\Tset)$ and $\alpha \in \rset$).

\section{Power Descent and Entropic Mirror Descent}
\label{sec:EMD}

Recall from \Cref{sec:optimPb} that the Power Descent is defined for all $\alpha \in \rset \setminus \lrcb{1}$. In this section, we first establish in \Cref{lem:extenstionAlpha1} that the Power Descent can be extended to the case $\alpha = 1$ and that we recover an Entropic Mirror Descent, showing that a deeper connection runs between the two approaches beyond the one identified by the \aei descent framework. This result relies on typical convergence and differentiability assumptions summarised in \ref{hypLimAlpha1} and which are deferred to \Cref{sec:FirstLemma}, alongside with the proof of \Cref{lem:extenstionAlpha1}.

\begin{prop}[Limiting case $\alpha \to 1$] \label{lem:extenstionAlpha1} Assume \ref{hyp:positive} and \ref{hypLimAlpha1}. Then, for all continuous and bounded real-valued functions $h$ on $\Tset$, we have that
$$
\lim_{\alpha\to 1} [\iteration (\mu)](h) = [\iteration[1](\mu)](h) \eqsp,
$$
where for all $\mu \in \meas{1}(\Tset)$ and all $\theta \in \Tset$, we have set
\begin{align}\label{eq:extensionAlpha1}
\iteration[1](\mu)(\rmd \theta) =  \frac{\mu(\rmd \theta) e^{- \eta \bmuf[\mu][1](\theta)}}{ \mu\left(  e^{- \eta \bmuf[\mu][1]} \right) } \quad \mbox{and} \quad  \bmuf[\mu][1](\theta) = \int_\Yset k(\theta,y) \log \left(\frac{\mu k(y)}{p(y)} \right) \nu(\rmd y) \eqsp.
\end{align}
\end{prop}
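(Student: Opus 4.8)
The plan is to write the one-step transition as a ratio of integrals against $\mu$,
$[\iteration(\mu)](h) = \mu\lr{h \, \GammaAlpha(\bmuf + \cte)} / \mu\lr{\GammaAlpha(\bmuf + \cte)}$,
and to pass to the limit $\alpha \to 1$ separately in the numerator and the denominator by dominated convergence, the conclusion then following by taking the ratio. The target integrand is $\rme^{-\eta(\bmuf[\mu][1](\theta) + \cte)}$, whose normalising factor $\rme^{-\eta\cte}$ cancels between numerator and denominator so that the ratio reproduces exactly $[\iteration[1](\mu)](h)$ as defined in \eqref{eq:extensionAlpha1}. The argument thus reduces to establishing a pointwise-in-$\theta$ limit for $\GammaAlpha(\bmuf(\theta)+\cte)$ together with a uniform domination allowing the interchange of limit and integral.

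First I would establish the pointwise convergence of the translated gradient. A direct computation gives $\falpha'(u) = (u^{\alpha-1}-1)/(\alpha-1)$, so that $\falpha'(u) \to \log u = \falpha[1]'(u)$ as $\alpha \to 1$ for every $u>0$. Applying dominated convergence in the inner integral over $\Yset$ that defines $\bmuf(\theta)$ — the required envelope on a neighbourhood of $\alpha=1$ being precisely what \ref{hypLimAlpha1} supplies — yields $\lim_{\alpha\to1}\bmuf(\theta) = \bmuf[\mu][1](\theta)$ for each fixed $\theta \in \Tset$. I would then combine this with the limiting behaviour of the transform: writing $s=\alpha-1$ and taking logarithms, $\log \GammaAlpha(\bmuf(\theta)+\cte) = -\tfrac{\eta}{s}\log\lr{1 + s\,(\bmuf(\theta)+\cte)}$, and since $\bmuf(\theta)+\cte$ stays bounded as $\alpha\to1$, a first-order expansion of $\log(1+\cdot)$ gives $\log\GammaAlpha(\bmuf(\theta)+\cte) \to -\eta\,(\bmuf[\mu][1](\theta)+\cte)$, hence $\GammaAlpha(\bmuf(\theta)+\cte) \to \rme^{-\eta(\bmuf[\mu][1](\theta)+\cte)}$ pointwise. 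Boundedness of $\bmuf+\cte$ also ensures that for $\alpha$ close enough to $1$ the base $1+s\,(\bmuf(\theta)+\cte)$ is positive, so the quantity is well defined along the limit.

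Finally I would pass to the limit under the outer integral against $\mu$. Using $|\log(1+x)| \leq 2|x|$ for small $|x|$ together with the uniform bound on $\bmuf+\cte$ from \ref{hypLimAlpha1}, the expression $\GammaAlpha(\bmuf(\theta)+\cte)$ is bounded uniformly in $\theta$ and in $\alpha$ near $1$ by a constant; since $\mu \in \meas{1}(\Tset)$ is a probability measure and $h$ is continuous and bounded, dominated convergence gives $\mu\lr{h\,\GammaAlpha(\bmuf+\cte)} \to \mu\lr{h\,\rme^{-\eta(\bmuf[\mu][1]+\cte)}}$ and likewise for the denominator with $h\equiv 1$. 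The same boundedness shows $\rme^{-\eta\bmuf[\mu][1]}$ is bounded above and below by positive constants, so the limiting denominator $\mu(\rme^{-\eta\bmuf[\mu][1]})$ lies in $(0,\infty)$ and the ratio is well defined. Dividing and cancelling the factor $\rme^{-\eta\cte}$ then yields the claim.

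The main obstacle is the justification of the two interchanges of limit and integral. The outer one is comparatively routine once $\bmuf$ is known to be uniformly bounded, but the inner one is delicate: the integrand $k(\theta,y)\,(u^{\alpha-1}-1)/(\alpha-1)$ with $u = \mu k(y)/p(y)$ must be dominated uniformly for $\alpha$ in a neighbourhood of $1$, which requires controlling $(u^{\alpha-1}-1)/(\alpha-1)$ by a single $\nu$-integrable envelope simultaneously for $u$ near $0$ and for $u$ large. This is exactly the role of the convergence-and-differentiability hypotheses \ref{hypLimAlpha1}, so the crux of the proof is to verify that these assumptions indeed deliver the domination needed for both applications of dominated convergence.
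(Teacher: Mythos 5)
Your proof is correct and takes essentially the same route as the paper: establish the pointwise-in-$\theta$ limit $\GammaAlpha(\bmuf(\theta)+\cte) \to \rme^{-\eta(\bmuf[\mu][1](\theta)+\cte)}$, pass to the limit in the numerator and the denominator separately by dominated convergence using the envelopes supplied by \ref{hypLimAlpha1}, and cancel the factor $\rme^{-\eta\cte}$ in the ratio. The only difference is cosmetic: where you first prove $\lim_{\alpha\to 1}\bmuf(\theta)=\bmuf[\mu][1](\theta)$ by dominated convergence (with the envelope from \ref{hypLimAlpha1}-\ref{hypD:2}) and then Taylor-expand $\log(1+x)$, the paper reaches the same pointwise limit by writing $\GammaAlpha(\bmuf(\theta)+\cte)=\exp\bigl(-\eta\,\log\lrb{(\alpha-1)(\bmuf(\theta)+\cte)+1}/(\alpha-1)\bigr)$ and evaluating the exponent through a L'H\^{o}pital-type computation in $\alpha$, justified by the same assumption.
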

Here, we recognise the one-step transition associated to the Entropic Mirror Descent applied to $\Psif[1]$. This algorithm is a special case of \cite{daudel2020infinitedimensional} with $\GammaAlpha(v) = e^{-\eta v}$ and $\alpha = 1$ and as such, it is known to lead to a systematic decrease in the forward Kullback-Leibler divergence and to enjoy an $O(1/N)$ convergence rate under the assumptions that \eqref{eq:condCV} holds and $\eta \in (0,1)$ \cite[Theorem 3]{daudel2020infinitedimensional}.

We have thus obtained that the Power Descent coincides exactly with the Entropic Mirror Descent applied to $\Psif[1]$ when $\alpha = 1$ and we now focus on understanding the links between Power Descent and Entropic Mirror Descent when $\alpha \in \rset \setminus \lrcb{1}$. For this purpose, let $\cte$ be such that $(\alpha -1) \cte \geq 0$ and let us study first-order approximations of the Power Descent and the Entropic Mirror Descent applied to $\Psif$ when $\bmuf[\mu_n](\theta) \approx \mu_n(\bmuf[\mu_n])$ for all $\theta \in \Tset$.

Letting $\eta > 0$, we have that the update formula for the Power Descent is given by
$$
\mu_{n+1} (\rmd \theta) = \frac{\mu_n (\rmd \theta)\left[(\alpha-1) (\bmuf[\mu_n](\theta)+\cte) + 1\right]^{\frac{\eta}{1-\alpha}} }{\mu_n(\left[(\alpha-1)(\bmuf[\mu_n]+\cte) +1\right]^{\frac{\eta}{1-\alpha}})} \eqsp, \quad n\in\nset^\star \eqsp .
$$
Now using the first order approximation $u^{\frac{\eta}{1-\alpha}} \approx v^{\frac{\eta}{1-\alpha}} - \frac{\eta}{\alpha-1} v^{\frac{\eta}{1-\alpha}-1}(u -v)$ with $u = \frac{(\alpha-1)(\bmuf[\mu_n](\theta)+\cte) +1}{(\alpha-1)(\mu(\bmuf[\mu_n])+\cte) +1}$ and $v =1$, we can deduce the following approximated update formula
$$
\mu_{n+1}(\rmd \theta) = \mu_n (\rmd \theta)\left[1 - \frac{\eta}{\alpha-1}  \frac{\bmuf[\mu_n](\theta)- \mu_n(\bmuf[\mu_n]) }{\mu_n(\bmuf[\mu_n])+ \cte +1/(\alpha-1)}  \right] \eqsp, \quad n\in\nset^\star \eqsp.
$$
Letting $\eta' >0$, the update formula for the Entropic Mirror Descent applied to $\Psif$ can be written as
\begin{align} \label{eq:EMDeta}
\mu_{n+1} (\rmd \theta) = \frac{ \mu_n (\rmd \theta) \exp \left[- \eta' (\bmuf[\mu_n](\theta)+\cte)\right]} {\mu_n(\exp\left[-\eta'(\bmuf[\mu_n]+\cte) \right])} \eqsp, \quad n\in\nset^\star \eqsp,
\end{align}
and we obtain in a similar fashion that an approximated version of this iterative scheme is
$$
\mu_{n+1}(\rmd \theta) = \mu_n (\rmd \theta)\left[1 - \eta' \left(\bmuf[\mu_n](\theta)- \mu_n(\bmuf[\mu_n]) \right) \right] \eqsp, \quad n\in\nset^\star \eqsp.
$$
Thus, for the two approximated formulas above to coincide, we need to set $\eta' =\eta \lrb{(\alpha-1)(\mu_n(\bmuf[\mu_n]) + \cte) +1}^{-1}$. Now coming back to \eqref{eq:EMDeta}, we see that this leads us to consider the update formula given by
\begin{align}\label{eq:renyiEMD}
\mu_{n+1}(\rmd \theta) = \frac{\mu_n(\rmd \theta)  \exp \lrb{- \eta \frac{\bmuf[\mu_n](\theta)}{(\alpha-1)(\mu_n(\bmuf[\mu_n]) + \cte) + 1} } }{\mu_n \lr{\exp \lrb{- \eta \frac{\bmuf[\mu_n]}{(\alpha-1)(\mu_n(\bmuf[\mu_n]) + \cte) + 1}}}} \eqsp, \quad n\in\nset^\star \eqsp.
\end{align}
Observe then that \eqref{eq:renyiEMD} can again be seen as an Entropic Mirror Descent, but applied this time to the objective function defined for all $\alpha \in \rset \setminus \lrcb{0,1}$ by
\begin{align*}
\PsifAR(\mu) &\eqdef \frac{1}{\alpha(\alpha-1)} \log \lr{\int_\Yset \mu k(y)^{\alpha} p(y)^{1-\alpha} \nu(\rmd y) + (\alpha-1) \cte} \eqsp,
\end{align*}
meaning we have applied the monotonic transformation
$$
u \mapsto  \frac{1}{\alpha(\alpha-1)} \log \lr{\alpha(\alpha-1)u + \alpha + (1-\alpha)\int_\Yset p(y) \nu(\rmd y) + (\alpha-1)\cte}
$$
to the initial objective function $\Psif$ (see \Cref{sec:renyiD} for the derivation of \eqref{eq:renyiEMD} based on the objective function $\PsifAR$). Hence, in the spirit of Renyi's $\alpha$-divergence gradient-based methods for Variational Inference (e.g \cite{2015arXiv151103243H, 2016arXiv160202311L}), we can motivate the iterative scheme \eqref{eq:renyiEMD} by observing that we recover the Variational Renyi bound introduced in \cite{2016arXiv160202311L} up to a constant $-\alpha^{-1}$ when we let $p = p(\cdot, \data)$, $\cte = 0$ and $\alpha > 0$ in $\PsifAR$. For this reason we call the algorithm given by \eqref{eq:renyiEMD} the \textit{Renyi Descent} thereafter.

Contrary to the Entropic Mirror Descent applied to $\Psif$, the Renyi Descent now shares the same first-order approximation as the Power Descent. This might explain why the behavior of the Entropic Mirror Descent applied to $\Psif$ and of the Power Descent differed greatly when $\alpha < 1$ in the numerical experiments from \cite{daudel2020infinitedimensional} despite their theoretical connection through the \aei descent framework (the former performing poorly numerically compared to the later as the dimension increased). 

Strikingly, we can prove an $O(1/N)$ convergence rate towards the global optimum for the Renyi Descent. Letting $\cte' \in \Rset$, denoting by $\Domain^{AR}$ an interval of $\Rset$ such that for all $\theta \in \Tset$ and all $\mu \in \meas{1}(\Tset)$,
$$
\frac{\bmuf(\theta) + 1/(\alpha-1)}{(\alpha-1) (\mu(\bmuf) + \cte) + 1} + \cte' \quad \mbox{and} \quad \frac{\mu(\bmuf) + 1/(\alpha-1)}{(\alpha-1) (\mu(\bmuf) + \cte) + 1} + \cte' \in \Domain^{AR}
$$
and introducing the assumption on $\eta$
\begin{hyp}{A}
\item\label{hyp:gamma} For all $v \in \Domain^{AR}$, $1 - \eta (\alpha -1)(v-\cte') \geq 0$.
\end{hyp}
we indeed have the following convergence result.

\begin{thm}
\label{thm:Renyi}
Assume \ref{hyp:positive} and \ref{hyp:gamma}. Let $\alpha \in \rset \setminus \lrcb{1}$ and let $\cte$ be such that $(\alpha -1) \cte > 0$. Define $\binfty \eqdef \sup_{\theta \in \Tset, \mu \in \meas{1}(\Tset)} |\bmuf(\theta) + 1/(\alpha-1)|$ and assume that $\binfty < \infty$. Moreover, let $\mu_1\in\meas{1}(\Tset)$ be such that
$\Psif(\mu_1)<\infty$. Then, the following assertions hold.
\begin{enumerate}[label=(\roman*)]
\item \label{item:admiss1} The sequence $(\mu_n)_{n\in\nstar}$ defined by \eqref{eq:renyiEMD} is well-defined and the sequence
$(\Psif (\mu_n))_{n\in\nstar}$ is non-increasing.

\item \label{item:admiss2} For all $N \in \nstar$, we have
\begin{align}\label{eq:rate}
   \Psif(\mu_N) - \Psif(\mu^\star) \leq \frac{\cteinf}{N} \left[ KL\couple[\mu^\star][\mu_1] + L\frac{ \ctesup}{\ctemono(\alpha-1)\cte} \Delta_1 \right]\eqsp,
\end{align}
where $\mu^\star$ is such that $\Psif(\mu^\star) = \inf_{\zeta \in \meas{1, \mu_1}(\Tset)} \Psif(\zeta)$, $\meas{1, \mu_1}(\Tset)$ denotes the set of probability measures dominated by $\mu_1$, $KL\couple[\mu^\star][\mu_1] = \int_\Tset \log \left(\rmd\mu^\star / \rmd\mu_1 \right) \rmd\mu^\star$, $\Delta_1 = \Psif(\mu_1) - \Psif(\mu^\star)$ and $\cteinf$, $L$, $\ctesup$, $\ctemono$ are finite constants defined in \eqref{eq:cteThm}.
\end{enumerate}
\end{thm}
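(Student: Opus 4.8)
The plan is to leverage the identification, already derived above in establishing \eqref{eq:renyiEMD}, of the Renyi Descent as an Entropic Mirror Descent applied to the transformed objective $\PsifAR$, and then to run a mirror-descent analysis anchored on the genuine convexity of $\Psif$. The algebraic backbone is the identity
\begin{align*}
Z(\mu) \eqdef (\alpha-1)(\mu(\bmuf) + \cte) + 1 = \int_\Yset (\mu k(y))^{\alpha} p(y)^{1-\alpha} \nu(\rmd y) + (\alpha-1)\cte \eqsp,
\end{align*}
which follows from $\falpha'(u) = \frac{1}{\alpha-1}(u^{\alpha-1}-1)$ together with $\int_\Yset \mu k(y)\nu(\rmd y)=1$ (as $K$ is Markov). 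Setting $g_\mu(\theta) = [\bmuf(\theta)+1/(\alpha-1)]/Z(\mu)$, the function $g_{\mu_n}$ is the gradient of $\PsifAR$ at $\mu_n$, and since the $\theta$-constant $1/((\alpha-1)Z(\mu_n))$ is absorbed by the normalisation, \eqref{eq:renyiEMD} reads $\mu_{n+1}(\rmd\theta)\propto\mu_n(\rmd\theta)e^{-\eta g_{\mu_n}(\theta)}$.

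For \ref{item:admiss1}, I would argue by induction. Because $(\alpha-1)\cte>0$ we have $Z(\mu_n)\geq(\alpha-1)\cte>0$, so $g_{\mu_n}$ is bounded (its numerator is controlled by $\binfty<\infty$), the normaliser $\mu_n(e^{-\eta g_{\mu_n}})$ lies in $(0,\infty)$, each $\mu_n$ is equivalent to $\mu_1$, and $\Psif(\mu_n)<\infty$ propagates. Monotonicity then transfers from $\PsifAR$ to $\Psif$: the map sending $\Psif$ to $\PsifAR$, namely $u\mapsto\frac{1}{\alpha(\alpha-1)}\log(\alpha(\alpha-1)u+\mathrm{const})$, has derivative $1/Z(\mu)>0$ and is hence strictly increasing, so any decrease in $\PsifAR$ is a decrease in $\Psif$. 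Assumption \ref{hyp:gamma}, which reads $1-\eta(\alpha-1)g_{\mu_n}(\theta)\geq0$, is the substitute for the step-size restriction $\eta\in(0,1]$ of the Power Descent and is precisely what secures the descent inequality for this non-standard exponential transform outside the \aei descent framework.

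For the rate \ref{item:admiss2}, convexity of $\falpha$ and linearity of $\mu\mapsto\mu k(y)$ give $\Psif(\mu_n)-\Psif(\mu^\star)\leq\int_\Tset\bmuf[\mu_n]\,\rmd(\mu_n-\mu^\star)$; as $\mu_n,\mu^\star$ are probability measures the additive constant drops and the right-hand side equals $Z(\mu_n)\int_\Tset g_{\mu_n}\,\rmd(\mu_n-\mu^\star)$. The three-point identity for the entropic mirror map applied to $\mu_{n+1}\propto\mu_n e^{-\eta g_{\mu_n}}$ gives
\begin{align*}
\eta\int_\Tset g_{\mu_n}\,\rmd(\mu_n-\mu^\star)\leq KL\couple[\mu^\star][\mu_n]-KL\couple[\mu^\star][\mu_{n+1}]+R_n\eqsp,
\end{align*}
with remainder $R_n=\eta\int_\Tset g_{\mu_n}\,\rmd(\mu_n-\mu_{n+1})-KL\couple[\mu_{n+1}][\mu_n]$. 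Bounding $Z(\mu_n)$ between the constants $\ctemono$ and $\ctesup$ of \eqref{eq:cteThm} (the lower bound $Z(\mu_n)\geq(\alpha-1)\cte$ producing the factor $1/((\alpha-1)\cte)$), summing over $n=1,\dots,N$, telescoping the Kullback--Leibler terms, and using monotonicity from \ref{item:admiss1} to bound $\Psif(\mu_N)-\Psif(\mu^\star)\leq\frac{1}{N}\sum_{n=1}^N[\Psif(\mu_n)-\Psif(\mu^\star)]$ then yields the $O(1/N)$ estimate \eqref{eq:rate}.

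The crux --- and what upgrades the generic $O(1/\sqrt N)$ of fixed-step mirror descent to the claimed $O(1/N)$ --- is controlling the aggregate remainder $\sum_n R_n$. The plan is to prove a relative-smoothness-type inequality bounding each $R_n$ by a fixed multiple of the per-step decrease $\Psif(\mu_n)-\Psif(\mu_{n+1})$, so that $\sum_n R_n$ telescopes into a term proportional to $\Delta_1=\Psif(\mu_1)-\Psif(\mu^\star)$, matching the second summand of \eqref{eq:rate}. Establishing this estimate is where the technical effort concentrates: it must keep the $\mu_n$-dependent normalisation $Z(\mu_n)$ under control throughout and rely on \ref{hyp:gamma} together with $\binfty<\infty$ to tame the exponential factors, since, unlike the pure Entropic Mirror Descent setting of \cite[Theorem 3]{daudel2020infinitedimensional}, the effective step size here varies with $n$ through $Z(\mu_n)$.
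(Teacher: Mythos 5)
Your overall architecture matches the paper's: both proofs start from the convexity bound $\Psif(\mu_n)-\Psif(\mu^\star)\leq\int_\Tset\bmuf[\mu_n]\,\rmd(\mu_n-\mu^\star)$, pass to the rescaled gradient, extract a telescoping Kullback--Leibler sum from the exponential-weights update, and then close the argument by bounding the per-step remainder by a constant multiple of $\Psif(\mu_n)-\Psif(\mu_{n+1})$ so that it telescopes into $\Delta_1$. However, the two load-bearing technical steps are asserted rather than proven, and they are precisely where the theorem's content lies. First, the monotonicity in \ref{item:admiss1}: you claim that \ref{hyp:gamma} ``secures the descent inequality,'' but an Entropic Mirror Descent step does not automatically decrease its objective (let alone the untransformed $\Psif$), and your transfer-via-monotone-transformation argument only shifts the burden to proving descent for $\PsifAR$, which you never do. The paper proves descent directly for $\Psif$ (\Cref{thm:monotone}) by invoking \Cref{lem:fondam} ($A_\alpha\leq\Psif(\mu)-\Psif(\zeta)$ for $\zeta\preceq\mu$) and rewriting
\begin{align*}
A_\alpha = \left[(\alpha-1)(\mu(\bmuf)+\cte)+1\right]\,\Cov\!\left((V-\cte')\,\tgamma^{\alpha-1}(V),\,1-\tgamma(V)\right)\eqsp,
\end{align*}
where $V$ is the rescaled translated gradient under $\mu$; assumption \ref{hyp:gamma} makes both factors increasing in $V$, so the covariance is nonnegative and $A_\alpha\geq 0$. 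Second, and more importantly, the ``relative-smoothness-type inequality'' bounding your remainder $R_n$ by the per-step decrease --- which you correctly identify as the crux upgrading $O(1/\sqrt N)$ to $O(1/N)$ --- is exactly the paper's \Cref{lem:mono:refined},
\begin{align*}
\frac{(\alpha-1)\cte\,\ctemono}{2}\,\Var_{\mu_n}(V_n)\;\leq\;\Psif(\mu_n)-\Psif(\mu_{n+1})\eqsp,
\end{align*}
and you explicitly defer it (``establishing this estimate is where the technical effort concentrates''). Its proof again goes through the covariance representation above, this time with a two-point symmetrization $\Cov(\cdot,\cdot)=\frac12\PE[(\cdots)(U-V)^2]$ and difference-quotient lower bounds that produce $\ctemono$; without this representation (which you never invoke), neither monotonicity nor the remainder bound gets established. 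A proposal that leaves both of these unproven has a genuine gap, not merely omitted routine detail.

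Two further points of calibration against the paper's actual computation. The paper does not use the three-point inequality in your form; it uses the exact identity $-\eta V_n=\log\mu_n(e^{-\eta V_n})+\log(\rmd\mu_{n+1}/\rmd\mu_n)$ and controls the constant $\mu_n(\eta V_n)+\log\mu_n(e^{-\eta V_n})$ via $L$-smoothness of $v\mapsto e^{-\eta v}$ on $\Domain^{AR}$ together with $\log(1+u)\leq u$, yielding the variance term $\frac{L\ctesup}{2}\Var_{\mu_n}(V_n)$; this is equivalent in spirit to your decomposition, so that difference is cosmetic. But your statement that $Z(\mu_n)$ is bounded ``between the constants $\ctemono$ and $\ctesup$'' misattributes their role: the upper bound on $Z(\mu_n)$ produces $\eta\,\cteinf$, the lower bound $Z(\mu_n)\geq(\alpha-1)\cte$ produces the factor $1/((\alpha-1)\cte)$, while $\ctesup$ and $\ctemono$ arise from bounding the exponential weights and the difference quotients inside the remainder estimate, not from bounding $Z(\mu_n)$.
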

The proof of this result is deferred to \Cref{sec:proofThmRenyi} and we present in the next example an application of this theorem to the particular case of mixture models.

\begin{ex} Let $\alpha \in \rset \setminus \lrcb{1}$, let $J \in \nstar$, let $\Theta = (\theta_1, \ldots, \theta_J) \in \Tset^J$, let $\mu_1 = J^{-1} \sum_{j=1}^{J} \delta_{\theta_j}$ and let $\Domain^{AR} = [- \frac{\binfty}{(\alpha-1)\cte} + \cte', \frac{\binfty}{(\alpha-1)\cte} + \cte']$ with $\cte' \in \Rset$. In addition, assume that $1-\eta |\cte|^{-1} \binfty > 0$. Then, taking $\cte' = - 3 \frac{\binfty}{(\alpha-1)\cte}$, we obtain
 $$
 \Psif(\mu_N) - \Psif(\mu^\star) \leq \frac{|\alpha-1|(\binfty+|\cte|)}{N} \lrb{\frac{\log J}{\eta} + \frac{\sqrt{2 \log(J)}\binfty}{(\alpha-1)\cte (1 - \eta |\cte|^{-1} \binfty)}} \eqsp,
 $$
where we have used that $KL\couple[\mu^\star][\mu_1] \leq \log J$, $\Delta_1 \leq \sqrt{2 \log J} \binfty$ and that the constants defined in \eqref{eq:cteThm} satisfy $\cteinf = \eta^{-1} |\alpha-1|(\binfty +|\cte|)$, $L = \eta^2 e^{\eta \frac{\binfty}{(\alpha-1)\cte} - \eta \cte'}$, $\ctesup = e^{\eta \frac{\binfty}{(\alpha-1)\cte} + \eta \cte'}$ and $\ctemono = (1-\eta |\cte|^{-1} \binfty)\eta e^{-\eta \frac{\binfty}{(\alpha-1)\cte} - \eta \cte'}$.
\end{ex}
To put things into perspective, notice that the Renyi Descent enjoys an $O(1/\sqrt{N})$ convergence rate as a Entropic Mirror Descent algorithm for the sequence $(\Psif(N^{-1} \sum_{n=1}^{N} \mu_n))_{N \in \nstar}$ under our assumptions when $\eta$ is proportional to $1/\sqrt{N}$, $N$ being fixed (see \cite{BECK2003167} or \cite[Theorem 4.2.]{MAL-050}).

The improvement thus lies in the fact that deriving an $O(1/N)$ convergence rate usually requires stronger smoothness assumptions on $\Psif$ \citep[Theorem 6.2]{MAL-050} that we do not assume in \Cref{thm:Renyi}. Furthermore, due to the monotonicity property, our result only involves the measure $\mu_N$ at time $N$ while typical Entropic Mirror Result are expressed in terms of the average $N^{-1} \sum_{n=1}^{N} \mu_n$. 

Finally, observe that the Renyi Descent becomes feasible in practice for any choice of kernel $K$ by letting $\mu$ be a weighted sum of Dirac measures i.e $\mu = \mulbd{}$ and by resorting to an unbiased estimate of $(\bmuf(\theta_j))_{1\leq j\leq J}$ (see Algorithm \ref{algo:mixture:RD} of \Cref{subsec:RDalgo}).

The theoretical results we have obtained are summarised in \Cref{table:recap} and we next move on to numerical experiments.
\begin{table}
  \caption{Summary of the theoretical results obtained in this paper compared to \cite{daudel2020infinitedimensional}}
  \label{table:recap}
  \centering
\begin{tabular}{llc}
      \toprule
    & Power Descent & Renyi Descent  \\
       \midrule
  \cite{daudel2020infinitedimensional} & $\alpha < 1$: convergence under restrictive assumptions;  & not covered\\
  & $\alpha >1$: $O(1/N)$ convergence rate &\\
      \midrule
  This paper &  $\alpha < 1$: full proof of convergence for mixture weights;  & $O(1/N)$  \\
  & extension to $\alpha = 1$ with $O(1/N)$ convergence rate & convergence rate\\
  \bottomrule
\end{tabular}
\end{table}

\section{Simulation study}
\label{sec:numerical}

Let the target $p$ be a mixture density of two $d$-dimensional Gaussian distributions multiplied by a positive constant $c$ such that $p(y) = c \times \left[ 0.5 \mathcal{N}(\boldsymbol{y}; -s \boldsymbol{u_d}, \boldsymbol{I_d}) + 0.5 \mathcal{N}(\boldsymbol{y}; s \boldsymbol{u_d}, \boldsymbol{I_d}) \right]$, where $\boldsymbol{u_d}$ is the $d$-dimensional vector whose coordinates are all equal to $1$, $s = 2$, $c = 2$ and $\boldsymbol{I_d}$ is the identity matrix. Given $J \in \nstar$, the approximating family is described by
$$
\lrc{y \mapsto \mu_{\lbd{}} k_h(y) = \sum_{j= 1}^{J} \lambda_j k_h(y- \theta_j) \eqsp : \eqsp \lbd{} \in \simplex_J, \theta_1, \ldots, \theta_J \in \Tset} \eqsp,
$$
where $K_{h}$ is a Gaussian transition kernel with bandwidth $h$ and $k_{h}$ denotes its associated kernel density.

Since the Power Descent and the Renyi Descent operate only on the mixture weights $\lbd{}$ of $\mu_{\lbd{}} k_h$ during the optimisation, a fully adaptive algorithm can be obtained by alternating $T$ times between an \textit{Exploitation step} where the mixture weights are optimised and an \textit{Exploration step} where the $\theta_1,  \ldots , \theta_J$ are updated, as written in Algorithm \ref{algo:adaptive}.

\SetInd{0.6em}{-1.8em}
\begin{algorithm}[H]
\caption{{\em Complete Exploitation-Exploration Algorithm}}
\label{algo:adaptive}
\textbf{Input}: $p$: measurable positive function, $\alpha $: $\alpha$-divergence parameter, $q_0$: initial sampler, $K_h$: Gaussian transition kernel, $T$: total number of iterations, $J$: dimension of the parameter set. \\
\textbf{Output}: Optimised weights $\lbd{}$ and parameter set $\Theta$. \\
Draw $\thetat[1][1], \ldots, \thetat[J][1]$ from $q_0$. \\
\For{$t = 1 \ldots T$}{
\begin{enumerate}[label={}]
\item \underline{Exploitation step} : \setlength{\parindent}{1cm} Set $\Theta = \{ \thetat[1][t], \ldots, \thetat[J][t] \}$. Perform the Power Descent or Renyi Descent and obtain the optimised mixture weights $\lbd{}$.
\item \underline{Exploration step} : \setlength{\parindent}{1cm} Perform any exploration step of our choice and obtain $\thetat[1][t+1], \ldots, \thetat[J][t+1]$.
\end{enumerate}
}
\end{algorithm}
Many choices of Exploration step can be envisioned in Algorithm \ref{algo:adaptive} since there is no constraint on $\lrcb{\theta_1, \ldots, \theta_J}$. Here, we consider the same Exploration step as the one they used in \cite{daudel2020infinitedimensional}: $h$ is set to be proportional to $J^{-1/(4 + d)}$ and the particles are updated by i.i.d sampling according to $\mulbd{} k_h$ (and we refer to \Cref{app:alter} for some details about alternative possible choices of Exploration step).

As for the Power Descent and Renyi Descent, we perform $N$ transitions of these algorithms at each time $t = 1 \ldots T$ according to Algorithm \ref{algo:mixture} and \ref{algo:mixture:RD}, in which the initial weights are set to be $[1/J, \ldots, 1/J]$, $\eta = \eta_0/\sqrt{N}$ with $\eta_0 > 0$ and $M$ samples are used in the estimation of $(\bmuf[\mulbd{}](\theta_{j,t}))_{1 \leq J}$ at each iteration $n = 1 \ldots N$. 
We take $J = 100$, $M \in \lrcb{100, 1000, 2000}$, $\alpha = 0.5$, $\cte = 0$, $\eta_0 = 0.3$ and the initial particles $\theta_1,  \ldots , \theta_J$ are sampled from a centered normal distribution $q_0$ with covariance matrix $5 \boldsymbol{I_d}$. We let $T=10$, $N=20$ and we replicate the experiment 100 times independently in dimension $d = 16$ for each algorithm. The convergence is assessed using a Monte Carlo estimate of the Variational Renyi bound introduced in \cite{2016arXiv160202311L} (which requires next to none additional computations).

The results for the Power Descent and the Renyi Descent are displayed on Figure \ref{fig:ComparePDandRD} below and we add the Entropic Mirror Descent applied to $\Psif$ as a reference.

\begin{figure}[ht!]
  \caption{Plotted is the average Variational Renyi bound for the Power Descent (PD), the Renyi Descent (RD) and the Entropic Mirror Descent applied to $\Psif$ (EMD) in dimension $d = 16$ computed over 100 replicates with $\eta_0 = 0.3$ and $\alpha = 0.5$ and an increasing number of samples $M$.}
  \label{fig:ComparePDandRD}
  \begin{tabular}{ccc}
  \includegraphics[width=4.3cm]{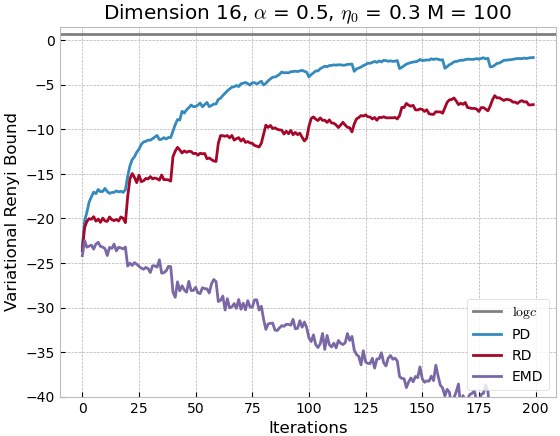} & \includegraphics[width=4.3cm]{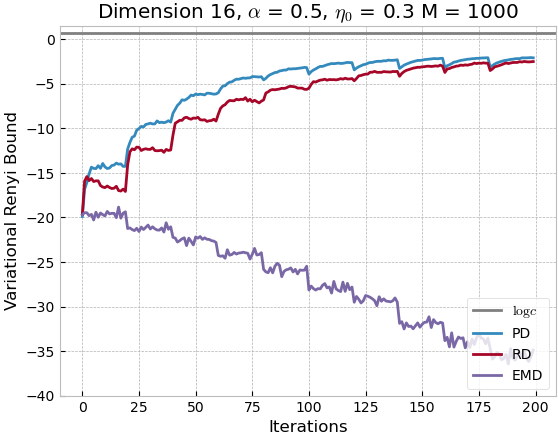} & \includegraphics[width=4.3cm]{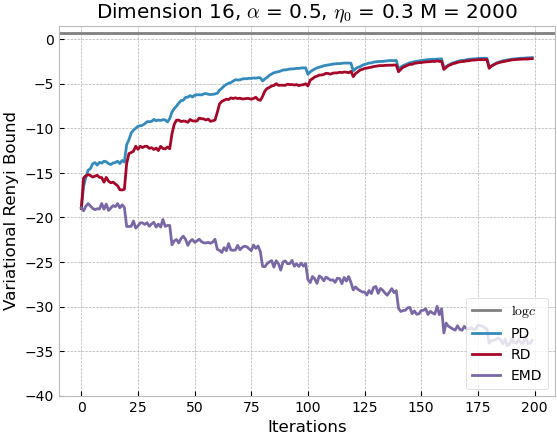}
  \end{tabular}
\end{figure}

We then observe that the Renyi Descent is indeed better-behaved compared to the Entropic Mirror Descent applied to $\Psif$, which fails in dimension $16$. Furthermore, it matches the performances of the Power Descent as $M$ increases in our numerical experiment, which illustrates the link between the two algorithms we have established in the previous section.

\paragraph{Discussion} From a theoretical standpoint, no convergence rate is yet available for the Power Descent algorithm when $\alpha <1$. An advantage of the novel Renyi Descent algorithm is then that while being close to the Power Descent, it also benefits from the Entropic Mirror Descent optimisation literature and as such $O(1/\sqrt{N})$ convergence rates hold, which we have been able to improve to $O(1/N)$ convergence rates.

A practical use of the Power Descent and of the Renyi Descent algorithms requires approximations to handle intractable integrals appearing in the update formulas so that the Power Descent applies the function $\Gamma(v) = [(\alpha - 1)v +1]^{\eta/(1-\alpha)}$ to an \textit{unbiased} estimator of the translated gradient $\bmuf(\theta) + \cte$ before renormalising, while the the Renyi Descent applies the Entropic Mirror Descent function $\Gamma(v) = e^{-\eta v}$ to a \textit{biased} estimator of $\bmuf[\mu_n](\theta)/(\mu_n(\bmuf[\mu_n]) + \cte + 1/(\alpha-1))$ before renormalising.


Finding which approach is most suitable between biased and unbiased $\alpha$-divergence minimisation is still an open issue in the literature, both theoretically and empirically \citep{biasedDomke2020, geffner2020difficulty, dhaka2021challenges}. Due to the exponentiation, considering the $\alpha$-divergence instead of Renyi's $\alpha$-divergence has for example been said to lead to high-variance gradients \citep{NIPS2017_6866, 2016arXiv160202311L} and low Signal-to-Noise ratio when $\alpha \neq 0$ \citep{geffner2020difficulty} during the stochastic gradient descent optimization.

In that regard, our work sheds light on additional links between unbiased and biased $\alpha$-divergence methods beyond the framework of stochastic gradient descent algorithms, as both the unbiased Power Descent and the biased Renyi Descent share the same first order approximation.

\section{Conclusion}
\label{sec:ccl}

We investigated algorithms that can be used to perform mixture weights optimisation for $\alpha$-divergence minimisation regardless of how the mixture parameters are obtained. We have established the full proof of the convergence of the Power Descent algorithm in the case $\alpha <1$ when we consider mixture models and bridged the gap with the case $\alpha =1$. We also introduced a closely-related algorithm called the Renyi Descent. We proved it enjoys an $O(1/N)$ convergence rate and illustrated in practice the proximity between these two algorithms when the number of samples $M$ increases.

Further work could include establishing theoretical results regarding the stochastic version of these two algorithms, as well as providing complementary empirical results comparing the performances of the unbiased $\alpha$-divergence-based Power Descent algorithm to those of the biased Renyi's $\alpha$-divergence-based Renyi Descent. 

%
%
%
%
%
%

\appendix

\newpage

\section{}

\subsection{Equivalence between \eqref{eq:optim:alpha} and \eqref{eq:GeneralProblem} with  $p(y) = p(y, \data)$}
\label{subsec:equi}

\begin{itemize}
  \item Case $\alpha = 1$ with $\falpha[1](u) = 1 - u + u \log(u)$ for all $u > 0$. Then,
  \begin{align*}
    \diverg[1] \couple[\mu K][\PP] &= \int_\Yset \falpha[1]\left(\frac{\mu k(y)}{p(y|\data)} \right) p(y|\data) \nu(\rmd y) \\
    &= \int_\Yset \mu k(y) \log \lr{\frac{\mu k(y)}{p(y|\data)}} \nu(\rmd y) + 0 \\
    & = \int_\Yset \mu k(y) \log \lr{\frac{\mu k(y)}{p(y, \data)}} \nu(\rmd y) + \log p(\data) \\
    &= \int_\Yset \falpha[1] \lr{\frac{\mu k(y)}{p(y,\data)}} p(y,\data) \nu(\rmd y) + 1 -p(\data) + \log p(\data)
  \end{align*}
  Thus,
  $$
  \arginf_{\mu \in \mathsf{M}} \diverg[1] \couple[\mu K][\PP] = \arginf_{\mu \in \mathsf{M}} \Psif[1](\mu; p) \quad \mbox{with} \quad p(y) = p(y, \data)
  $$
  \item Case $\alpha = 0$ with $\falpha[0](u) = u - 1 - \log(u)$ for all $u > 0$.
  \begin{align*}
   \diverg[0] \couple[\mu K][\PP] &= \int_\Yset \falpha[0]\left(\frac{\mu k(y)}{p(y|\data)} \right) p(y|\data) \nu(\rmd y) \\
   &= \int_\Yset - \log \left(\frac{\mu k(y)}{p(y|\data)} \right) p(y|\data) \nu(\rmd y) \\
   &= \int_\Yset - \log \left(\frac{\mu k(y)}{p(y, \data)} \right) p(y|\data) \nu(\rmd y) - \log p(\data) \\
   &= \frac{1}{p(\data)} \lrb{ \int_\Yset \falpha[1] \left(\frac{\mu k(y)}{p(y, \data)} \right) p(y,\data) \nu(\rmd y) + p(\data) - 1 - p(\data) \log p(\data)}
  \end{align*}
  Thus
  $$
  \arginf_{\mu \in \mathsf{M}} \diverg[0] \couple[\mu K][\PP] = \arginf_{\mu \in \mathsf{M}} \Psif[0](\mu; p) \quad \mbox{with} \quad p(y) = p(y, \data)
  $$
  \item Case $\alpha \in \rset \setminus \lrcb{1}$ with $\falpha(u) = \frac{1}{\alpha(\alpha-1)} \lrb{u^\alpha - 1 - \alpha(u-1)}$ for all $u > 0$.
  \begin{align}
  &\diverg \couple[\mu K][\PP] \nonumber\\
  &= \int_\Yset \falpha\left(\frac{\mu k(y)}{p(y|\data)} \right) p(y|\data) \nu(\rmd y) \nonumber \\
  &= \int_\Yset \frac{1}{\alpha(\alpha-1)} \lrb{\left(\frac{\mu k(y)}{p(y|\data)} \right)^{\alpha}- 1} p(y|\data) \nu(\rmd y) \nonumber \\
  &= p(\data)^{\alpha-1} \int_\Yset \frac{1}{\alpha(\alpha-1)} \lrb{\left(\frac{\mu k(y)}{p(y,\data)} \right)^{\alpha}- 1} p(y,\data) \nu(\rmd y) + \frac{p(\data)^{\alpha} - 1}{\alpha(\alpha-1)} \nonumber \\
  &= p(\data)^{\alpha-1} \int_\Yset \falpha \left(\frac{\mu k(y)}{p(y,\data)} \right) p(y,\data) \nu(\rmd y) + \frac{\alpha p(\data)^{\alpha-1} + (1-\alpha)p(\data)^{\alpha} - 1}{\alpha(\alpha-1) } \label{eq:limitOptimEqui}
  \end{align}
  Thus,
  $$
  \arginf_{\mu \in \mathsf{M}} \diverg \couple[\mu K][\PP] = \arginf_{\mu \in \mathsf{M}} \Psif(\mu; p) \quad \mbox{with} \quad p(y) = p(y, \data)
  $$
\end{itemize}

\subsection{\cite[Theorem 1]{daudel2020infinitedimensional} with $\GammaAlpha(v) = [(\alpha-1)v +1]^{\eta/(1-\alpha)}$}
\label{subsec:lem:admiss}

\begin{thm}[{\cite[Theorem 1]{daudel2020infinitedimensional} with $\GammaAlpha(v) = [(\alpha-1)v +1]^{\eta/(1-\alpha)}$}]
\label{thm:admiss}
Assume that $p$ and $k$ are as in \ref{hyp:positive}. Let $\alpha \in \rset \setminus \lrcb{1}$, let $\cte$ be such that $(\alpha-1)\cte \geq 0$, let $\mu \in \meas{1}(\Tset)$ and let $\eta \in (0,1]$ be such that
\begin{align}\label{eq:admiss}
0 <\mu(\GammaAlpha(\bmuf + \cte))<\infty \eqsp
\end{align}
holds and $\Psif(\mu) < \infty$. Then, the two following assertions hold.
\begin{enumerate}[label=(\roman*)]
\item \label{item:mono1Prev} We have  $\Psif \circ \iteration (\mu) \leq \Psif(\mu)$.
\item \label{item:mono2Prev} We have $\Psif \circ \iteration (\mu) =\Psif(\mu)$ if and only if $\mu=\iteration (\mu)$.
\end{enumerate}
\end{thm}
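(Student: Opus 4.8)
The plan is to establish both assertions through a single majorize–minimise (MM) construction powered by the convexity of $\falpha$, viewing $\iteration(\mu)$ as a (damped) minimiser of a tight surrogate of $\Psif$. First I would record the preliminaries: condition \eqref{eq:admiss} together with $\Psif(\mu)<\infty$ guarantees that $\iteration(\mu)$ is a genuine probability measure in $\meas{1}(\Tset)$ and that all integrals below are finite, while \ref{hyp:positive} and the Markov property $\int_\Yset k(\theta,y)\nu(\rmd y)=1$ yield the identity $(\alpha-1)\bmuf(\theta)+1=\int_\Yset (\mu k(y)/p(y))^{\alpha-1}k(\theta,y)\nu(\rmd y)>0$, which shows $\GammaAlpha(\bmuf+\cte)$ is well defined and positive once $(\alpha-1)\cte\geq 0$. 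I would also use that $\falpha$ is strictly convex with $\falpha'(u)=(u^{\alpha-1}-1)/(\alpha-1)$.

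Second, I would build the surrogate. For each $y$ introduce the responsibility probability measure $\omega_y(\rmd\theta)=k(\theta,y)\mu(\rmd\theta)/\mu k(y)$ and, for a competitor $\zeta\ll\mu$ with density $\rho=\rmd\zeta/\rmd\mu$, write $\zeta k(y)/p(y)=(\mu k(y)/p(y))\,\omega_y(\rho)$. Applying Jensen's inequality to the convex map $t\mapsto \falpha((\mu k(y)/p(y))\,t)$ and integrating against $p(y)\nu(\rmd y)$ produces a functional $Q(\zeta,\mu)\geq\Psif(\zeta)$ that is tight at $\zeta=\mu$, i.e. $Q(\mu,\mu)=\Psif(\mu)$. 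Minimising $\zeta\mapsto Q(\zeta,\mu)$ over $\{\rho\geq 0 : \mu(\rho)=1\}$ is a convex programme whose pointwise stationarity condition, after inserting $\falpha'$ and the identity above, reads $\rho(\theta)^{\alpha-1}[(\alpha-1)\bmuf(\theta)+1]=\mathrm{const}$; hence the minimiser is $\rho_\star(\theta)\propto[(\alpha-1)\bmuf(\theta)+1]^{1/(1-\alpha)}$, which is \emph{exactly} $\iteration(\mu)$ in the case $\eta=1$, $\cte=0$. For that case monotonicity is immediate: $\Psif(\iteration(\mu))\leq Q(\iteration(\mu),\mu)\leq Q(\mu,\mu)=\Psif(\mu)$.

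Third, I would extend to every $\eta\in(0,1]$ and every $\cte$ with $(\alpha-1)\cte\geq 0$. Writing the iterate $\iteration(\mu)=\zeta_\eta$ with density $\rho_\eta(\theta)\propto[(\alpha-1)(\bmuf(\theta)+\cte)+1]^{\eta/(1-\alpha)}$ with respect to $\mu$, it suffices to show $Q(\zeta_\eta,\mu)\leq Q(\mu,\mu)$, since $\Psif(\iteration(\mu))\leq Q(\zeta_\eta,\mu)$ always holds. I would prove this by showing that the scalar map $\eta\mapsto Q(\zeta_\eta,\mu)$ is non-increasing on $[0,1]$, so that the damped iterate never overshoots the surrogate minimiser $\rho_\star=\rho_1$; differentiating in $\eta$ and using the convexity of the per-$\theta$ integrands (equivalently, a Hölder/log-convexity argument applied to the normalising constant $\mu(\GammaAlpha(\bmuf+\cte))$) is where the constraint $\eta\leq 1$ is genuinely used, and the monotone shift $\cte$ is absorbed because it only reallocates mass towards smaller values of $\bmuf$. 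This yields assertion \ref{item:mono1Prev}.

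For assertion \ref{item:mono2Prev}, the equality $\Psif(\iteration(\mu))=\Psif(\mu)$ forces equality in both inequalities of the chain. Equality in the Jensen step, together with the strict convexity of $\falpha$, forces $\rho$ to be constant $\omega_y$-almost everywhere for $\nu$-almost every $y$, and equality in $Q(\zeta_\eta,\mu)\leq Q(\mu,\mu)$ pins the iterate at $\rho\equiv 1$, i.e. $\mu=\iteration(\mu)$; the converse implication is immediate from the definition of $\iteration$. \textbf{The main obstacle} I anticipate is precisely the monotonicity of $\eta\mapsto Q(\zeta_\eta,\mu)$ on $[0,1]$ and the clean handling of the shift $\cte$: proving $Q(\zeta_\eta,\mu)\leq Q(\mu,\mu)$ without any smoothness on $\Psif$ is the heart of the statement, and it is exactly here that the admissibility restriction $\eta\in(0,1]$ (rather than a small-step/descent-lemma condition) must be exploited.
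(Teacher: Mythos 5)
Your MM construction is sound and is a genuinely different route from the one the paper relies on: the paper does not reprove this statement at all (it is quoted from \cite[Theorem 1]{daudel2020infinitedimensional}), and the mechanism of that proof --- reproduced in this paper's own adaptation for the Renyi Descent via \Cref{lem:fondam} and \Cref{thm:monotone} --- is a first-order convexity bound $A_\alpha \leq \Psif(\mu) - \Psif(\zeta)$ followed by rewriting $A_\alpha$ as a covariance of two co-monotone transforms of $\bmuf + \cte$, nonnegative by Chebyshev's association inequality; there $\eta \in (0,1]$ and $(\alpha-1)\cte \geq 0$ enter through a single derivative-sign computation that handles all $\alpha \in \rset \setminus \lrcb{1}$ and all admissible $\cte$ at once. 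Your route instead majorises $\Psif$ by Jensen through the responsibilities $\omega_y$, and your identification of the surrogate minimiser is correct: the chain $\Psif(\iteration(\mu)) \leq Q(\iteration(\mu),\mu) \leq Q(\mu,\mu) = \Psif(\mu)$ does prove the theorem when $\eta = 1$ and $\cte = 0$, and the equality case (ii) then follows cleanly from strict convexity of $\falpha$ and $k > 0$, as you say.

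The genuine gap is the step you yourself flag: $Q(\zeta_\eta,\mu) \leq Q(\mu,\mu)$ for every $\eta \in (0,1]$ and every $\cte$ with $(\alpha-1)\cte \geq 0$ is the entire content of the theorem, it is never proved, and since this surrogate inequality is \emph{strictly stronger} than assertion (i), its validity cannot be presumed from the theorem being true. To see what is actually required, write $B = (\alpha-1)\bmuf + 1$, $B_{\cte} = B + (\alpha-1)\cte$ and $s = \eta/(1-\alpha)$; since for fixed $\mu$ the $\zeta$-dependent part of $Q$ collapses (for $\alpha \notin \lrcb{0,1}$; the case $\alpha = 0$ involves logarithms) to $\mu(\rho^\alpha B)/(\alpha(\alpha-1))$ under the constraint $\mu(\rho) = 1$, your claim is equivalent to the moment inequality
\begin{equation*}
\frac{1}{\alpha(\alpha-1)}\lrb{\frac{\mu\lr{B_{\cte}^{\,s\alpha}\,B}}{\mu\lr{B_{\cte}^{\,s}}^{\alpha}} - \mu\lr{B}} \;\leq\; 0 \eqsp.
\end{equation*}
For $\cte = 0$ and $\alpha \in (0,1)$ this does follow from log-convexity of $m(t) = \mu(B^t)$ with $m(0)=1$, since $s \leq 1 + s\alpha$ is exactly $\eta \leq 1$, giving $m(s)^\alpha m(1) \leq m(1+s\alpha)$ --- that is the honest version of your ``H\"older/log-convexity'' remark, and it is where $\eta \leq 1$ genuinely bites. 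But for $\cte \neq 0$ the inequality mixes the two distinct functions $B$ and $B_{\cte}$ and this interpolation no longer applies verbatim; your justification that the shift ``only reallocates mass towards smaller values of $\bmuf$'' is not an argument, since $\cte$ enters inside the power nonlinearly and changes the relative weights non-uniformly. The regimes $\alpha > 1$, $\alpha < 0$ and $\alpha = 0$ each flip the sign pattern ($s < 0$ when $\alpha > 1$) and need separate treatment, and ``differentiating in $\eta$'' is not carried out. Until the display above is established in all stated regimes, your proof covers only $\eta = 1$, $\cte = 0$, whereas the covariance argument the paper invokes disposes of the general case in one line.
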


\section{}

\subsection{Proof that \ref{hyp:compactFull} is satisfied in \Cref{ex:thm:limit}}
\label{sec:exGaussian}

\begin{proof}[Proof that \ref{hyp:compactFull} is satisfied in \Cref{ex:thm:limit}] \ \\

We have $k_h(\theta,y) = \frac{e^{-\| y - \theta \|^2 / (2h^2)}}{(2 \pi h^2)^{d/2}}$ and $p(y) = c \times \lrb{ 0.5 \frac{e^{-\| y - \theta^\star_1 \|^2 / 2}}{(2 \pi)^{d/2}}+ 0.5 \frac{e^{-\| y - \theta^\star_2 \|^2 / 2}}{(2 \pi)^{d/2}}}$ for all $\theta \in \Tset$ and all $y \in \Yset$. Recall that by assumption $\Tset = \mathcal{B}(0,r) \subset \rset^d$ with $r >0$. Then, for all $\alpha \in [0,1)$, we are interested in proving
\begin{align}\label{eq:BmuBdd}
\int_\Yset {\sup_{\theta \in \Tset}k(\theta,y)} \times \sup_{\theta' \in \Tset} \lr{\frac{k(\theta',y)}{p(y)}}^{\alpha-1} \nu(\rmd y)<\infty
\end{align}
and
\begin{align}\label{eq:PsiMuOne}
\int_\Yset \sup_{\theta \in \Tset} \left| \log\lr{\frac{k_h(\theta,y)}{p(y)}} \right| p(y) \nu(\rmd y) < \infty \eqsp.
\end{align}

\begin{enumerateList}
\item We start by proving \eqref{eq:BmuBdd}. First note that for all $\theta, \theta' \in \Tset$ and for all $y \in \Yset$ we can write
\begin{align*}
   \frac{k_h(\theta, y)}{k_h(\theta ', y)} &= e^{\frac{-\| y - \theta \|^2 + \| y - \theta ' \|^2}{2h^2}} = e^{\frac{2 <y, \theta - \theta '> - \| \theta \|^2 + \| \theta '\|^2}{2h^2}} \\
  & \leq e^{\frac{2 |<y,\theta - \theta'>| + \| \theta\|^2 + \|\theta '\|^2 }{2h^2}} \leq  e^{\frac{\|y \| \| \theta - \theta'  \| + r^2}{h^2}} \eqsp.
\end{align*}
from which we deduce that for all $\theta, \theta' \in \Tset$ and for all $y \in \Yset$,
\begin{align}\label{eq:boundRatioGaussian}
 \frac{k_h(\theta, y)}{k_h(\theta ', y)}  \leq e^{\frac{\|y \| 2 r + r^2}{h^2}} \eqsp
  \end{align}
and that
$$
\int_\Yset {\sup_{\theta \in \Tset}k(\theta,y)} \times \sup_{\theta' \in \Tset} \lr{\frac{k(\theta',y)}{p(y)}}^{\alpha-1} \nu(\rmd y) \leq \int_\Yset k(\theta,y) e^{\frac{\|y \| 2 r + r^2}{h^2}} \sup_{\theta' \in \Tset}  \lr{\frac{k(\theta',y)}{p(y)}}^{\alpha-1} \nu(\rmd y).
$$
Additionally, Jensen's inequality applied to the concave function $u \mapsto u^{1-\alpha}$ implies
\begin{align*}
\int_\Yset k(\theta,y) e^{\frac{\|y \| 2 r + r^2}{h^2}} \sup_{\theta' \in \Tset}  \lr{\frac{k(\theta',y)}{p(y)}}^{\alpha-1} \nu(\rmd y) & \leq \lr{ \int_\Yset k(\theta,y) e^{\frac{\|y \| 2 r + r^2}{(1-\alpha)h^2}} \sup_{\theta' \in \Tset}  \frac{p(y)}{k(\theta',y)} \nu(\rmd y)}^{1-\alpha} \\
& \leq \lr{ \int_\Yset \sup_{\theta, \theta' \in \Tset} \frac{k_h(\theta,y)}{k_h(\theta ', y)}  e^{\frac{\|y \| 2 r + r^2}{(1-\alpha)h^2}}  p(y) \nu(\rmd y)}^{1-\alpha}
\end{align*}
Now using \eqref{eq:boundRatioGaussian}, we can deduce
$$
\int_\Yset \sup_{\theta, \theta' \in \Tset} \frac{k_h(\theta,y)}{k_h(\theta ', y)}  e^{\frac{\|y \| 2 r + r^2}{(1-\alpha)h^2}} p(y) \nu(\rmd y) \leq \int_\Yset e^{\frac{\|y \| 2 r + r^2}{h^2}(1 + \frac{1}{1-\alpha})} p(y) \nu(\rmd y) < \infty \eqsp,
$$
which yields the desired result.
\item We now prove \eqref{eq:PsiMuOne}. For all $y \in \Yset$ and all $\theta \in \Tset$, we have
\begin{align*}
& e^{-\sup_{\theta \in \Tset} \frac{\| y - \theta \|^2}{2h^2}}\leq (2 \pi h^2)^{d/2} k_h(\theta, y)  \leq 1 \\
& e^{-\max_{i \in \lrcb{1,2}} \frac{\| y - \theta_i^\star \|^2}{2}}\leq  c^{-1} (2 \pi )^{d/2} p(y) \leq 1
\end{align*}
and we can deduce for all $y \in \Yset$ and  all $\theta \in \Tset$
\begin{align}\label{eq:BoundLogGaussian}
\left| \log\lr{ \frac{k_h(\theta, y)}{p(y)}} \right| & \leq \sup_{\theta \in \Tset} \frac{\| y - \theta \|^2}{2h^2}  + \max_{i \in \lrcb{1,2}} \frac{\| y - \theta_i^\star \|^2}{2} + d |\log h| + |\log c| \nonumber \\
& \leq \frac{(\| y \| + r)^2}{2}\lrb{\frac{1}{h^2} + 1} + d |\log h| + |\log c| \eqsp.
\end{align}
Since we have
$$
\int_\Yset \lr{ \frac{(\| y \| + r)^2}{2}\lrb{\frac{1}{h^2} + 1} + d |\log h| + |\log c| } p(y) \nu(\rmd y) < \infty
$$
we deduce that \eqref{eq:PsiMuOne} holds.
\end{enumerateList}
\end{proof}

%


\subsection{Proof of \Cref{thm:limit}}
\label{subsec:prooflimit}

We start with some preliminary results. Let $\zeta, \zeta' \in \meas{1}(\Tset)$. Recall that we say that $\zeta \mathcal{R} \zeta'$ if and only if $\zeta K=\zeta' K$ and that $\meas{1, \zeta}(\Tset)$ denotes the set of probability measures dominated by $\zeta$.

\begin{lem}
\label{lem:fixed:repulsive:prelim:R} Assume \ref{hyp:positive}. Let $\mathsf M$ be a convex subset of $\meas{1}(\Tset)$ and let $\zeta_1, \zeta_2 \in \meas{1}(\Tset)$ be such that
$$
\Psif(\zeta_1) = \Psif(\zeta_2) = \inf_{\zeta \in \mathsf M} \Psif(\zeta).
$$
Then, we have $\zeta_1 \mathcal{R} \zeta_2$.
\end{lem}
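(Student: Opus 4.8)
The plan is to exploit the convexity of the map $\mu \mapsto \Psif(\mu)$, which follows from the linearity of $\mu \mapsto \mu k(y)$ together with the convexity of $\falpha$, and then to upgrade the convexity inequality to the desired identity using the \emph{strict} convexity of $\falpha$. First I would record that each generator is strictly convex on $(0,\infty)$: a direct computation gives $\falpha''(u) = u^{\alpha-2}$ for $\alpha \in \rset \setminus \lrcb{0,1}$, while $\falpha[0]''(u) = u^{-2}$ and $\falpha[1]''(u) = u^{-1}$, all strictly positive. In particular, for $a,b>0$ with $a \neq b$ one has the strict inequality $\falpha(\tfrac{a+b}{2}) < \tfrac12 \falpha(a) + \tfrac12 \falpha(b)$.

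Next I would set $\bar\zeta = \tfrac12(\zeta_1 + \zeta_2)$. Since $\mathsf M$ is convex and (reading $\zeta_1,\zeta_2$ as elements of $\mathsf M$ attaining the infimum) $\bar\zeta \in \mathsf M$, we have $\Psif(\bar\zeta) \geq \inf_{\zeta \in \mathsf M}\Psif(\zeta)$. On the other hand, for every $y \in \Yset$ linearity gives $\bar\zeta k(y) = \tfrac12 \zeta_1 k(y) + \tfrac12 \zeta_2 k(y)$, so applying convexity of $\falpha$ pointwise yields
$$
\falpha\!\lr{\frac{\bar\zeta k(y)}{p(y)}} \leq \tfrac12 \falpha\!\lr{\frac{\zeta_1 k(y)}{p(y)}} + \tfrac12 \falpha\!\lr{\frac{\zeta_2 k(y)}{p(y)}} \eqsp.
$$
Note $\falpha \geq 0$ (since $\falpha(1)=0$ is the global minimum), hence $\Psif \geq 0$ and the common value $\inf_{\zeta \in \mathsf M}\Psif(\zeta) = \Psif(\zeta_1) = \Psif(\zeta_2)$ is finite, so all integrals below are well defined. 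Integrating the display against $p(y)\nu(\rmd y)$ gives $\Psif(\bar\zeta) \leq \tfrac12\Psif(\zeta_1)+\tfrac12\Psif(\zeta_2) = \inf_{\zeta \in \mathsf M}\Psif(\zeta)$. Combining with the reverse bound forces $\Psif(\bar\zeta) = \inf_{\zeta \in \mathsf M}\Psif(\zeta)$.

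The crux is then to turn this integral equality into a pointwise one. The nonnegative function $y \mapsto \tfrac12 \falpha(\zeta_1 k(y)/p(y)) + \tfrac12 \falpha(\zeta_2 k(y)/p(y)) - \falpha(\bar\zeta k(y)/p(y))$ has integral $\inf_{\zeta \in \mathsf M}\Psif(\zeta) - \Psif(\bar\zeta) = 0$ against $p(y)\nu(\rmd y)$, so it vanishes for $p\,\nu$-almost every $y$; using \ref{hyp:positive} to ensure $p>0$, it vanishes $\nu$-a.e. Hence for $\nu$-almost every $y$ the convexity inequality is an equality, and strict convexity of $\falpha$ forces $\zeta_1 k(y)/p(y) = \zeta_2 k(y)/p(y)$, i.e. $\zeta_1 k(y) = \zeta_2 k(y)$. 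Since this holds $\nu$-a.e., the densities of $\zeta_1 K$ and $\zeta_2 K$ coincide, so $\zeta_1 K = \zeta_2 K$, that is $\zeta_1 \mathcal R \zeta_2$.

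The main obstacle is precisely this last measure-theoretic passage: it is where strict (not merely weak) convexity of $\falpha$ is indispensable, and where one must verify that subtracting $\Psif(\bar\zeta)$ from the bound is licit, which is guaranteed by the finiteness of $\inf_{\zeta \in \mathsf M}\Psif(\zeta)$ and by $\falpha \geq 0$. I would also flag that the only place $\zeta_1,\zeta_2 \in \mathsf M$ is used is through $\bar\zeta \in \mathsf M$; if one merely knows $\zeta_1,\zeta_2 \in \meas{1}(\Tset)$ attaining the infimal value, the same argument goes through as soon as $\tfrac12(\zeta_1+\zeta_2) \in \mathsf M$.
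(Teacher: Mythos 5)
Your proof is correct and follows essentially the same route as the paper's: both exploit convexity of $\falpha$ to show that a convex combination of $\zeta_1$ and $\zeta_2$ (the paper uses a general $t\in(0,1)$, you use the midpoint, which is immaterial) also attains $\inf_{\zeta\in\mathsf M}\Psif(\zeta)$, then observe that the resulting nonnegative integrand has zero integral against $p\,\nu$ and invoke strict convexity of $\falpha$ to conclude $\zeta_1 k(y)=\zeta_2 k(y)$ for $p$-almost every $y$, i.e.\ $\zeta_1\mathcal R\zeta_2$. Your closing remark that the hypothesis is really only used through $\tfrac12(\zeta_1+\zeta_2)\in\mathsf M$ matches exactly how the paper's proof uses it as well.
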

\begin{proof}
 For all $y \in \Yset$, set $u_y= \zeta_1 k(y)/ p(y)$ and $v_y= \zeta_2 k(y) /p(y)$. Then, for all $y \in \Yset$ and for all $t \in (0,1)$, $\falpha(t u_y + (1- t) v_y) \leq t \falpha(u_y) + (1-t) \falpha(v_y)$ by convexity of $\falpha$ and we obtain
\begin{align}\label{eq:convex:equality1}
\Psif(t \zeta_1 + (1-t) \zeta_2) \leq t \Psif(\zeta_1) + (1-t) \Psif(\zeta_2) = \inf_{\zeta \in \mathsf M} \Psif(\zeta) \eqsp.
\end{align}
Furthermore, $t \zeta_1 + (1-t) \zeta_2 \in \mathsf M$ which implies that we have equality in \eqref{eq:convex:equality1}.

Consequently, for all $t \in (0,1)$ :
$$
\int_\Yset \underbrace{\left[  t \falpha(u_y) + (1-t) \falpha(v_y) - \falpha(t u_y + (1- t) v_y)\right] }_{\geq 0}  p(y) \nu(\rmd y) = 0 \eqsp.
$$
Now using that $\falpha$ is strictly convex, we deduce that for $p$-almost all $y \in \Yset$, $\zeta_1 k(y) = \zeta_2 k(y)$ that is $\zeta_1 \mathcal{R} \zeta$.
\end{proof}

\begin{lem}\label{lem:fixed:point:inf}
Assume \ref{hyp:positive}. Let $\alpha \in \rset \setminus \lrcb{1}$, let $\cte$ be such that $(\alpha-1) \cte \geq 0$ and let $\muf \in \meas{1}(\Tset)$ be a fixed point of $\iteration$. Then,
\begin{equation}\label{eq:inf:fixed}
\Psif(\muf) = \inf_{\zeta \in \meas{1,\muf}(\Tset)} \Psif(\zeta)\eqsp.
\end{equation}
Furthermore, for all $\zeta \in \meas{1,\muf}(\Tset)$, $\Psif(\muf) = \Psif(\zeta)$ implies that $\muf \mathcal{R} \zeta$.

\end{lem}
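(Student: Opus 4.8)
The plan is to combine the convexity of $\Psif$ with a precise description of the fixed point $\muf$. The key preliminary observation is that $\iteration(\muf) = \muf$ forces the gradient $\bmuf[\muf]$ to be constant $\muf$-almost everywhere. Writing out the fixed-point identity together with the Iteration step of Algorithm~\ref{algo:aei}, we get for $\muf$-almost all $\theta$
$$
\GammaAlpha(\bmuf[\muf](\theta) + \cte) = \muf\lr{\GammaAlpha(\bmuf[\muf] + \cte)} \eqsp.
$$
Since $\GammaAlpha(v) = [(\alpha-1)v + 1]^{\eta/(1-\alpha)}$ has derivative $-\eta[(\alpha-1)v+1]^{\eta/(1-\alpha)-1}$ on $\Domain$, it is strictly decreasing, hence injective; the displayed identity thus yields $\bmuf[\muf](\theta) = \muf(\bmuf[\muf])$ for $\muf$-almost all $\theta$. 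As any $\zeta \in \meas{1,\muf}(\Tset)$ is dominated by $\muf$, a $\muf$-null set is $\zeta$-null, so this constancy also holds $\zeta$-almost everywhere and consequently $\zeta(\bmuf[\muf]) = \muf(\bmuf[\muf])$.

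For the optimality statement \eqref{eq:inf:fixed}, I would fix $\zeta \in \meas{1,\muf}(\Tset)$, set $u_y = \muf k(y)/p(y)$ and $v_y = \zeta k(y)/p(y)$, and use the tangent-line inequality of the convex function $\falpha$,
$$
\falpha(v_y) \geq \falpha(u_y) + \falpha'(u_y)(v_y - u_y) \eqsp, \qquad y \in \Yset \eqsp.
$$
Multiplying by $p(y)$, integrating against $\nu$ and exchanging the order of integration to recognise $\int_\Yset k(\theta,y)\falpha'(u_y)\nu(\rmd y) = \bmuf[\muf](\theta)$, this gives
$$
\Psif(\zeta) - \Psif(\muf) \geq \zeta(\bmuf[\muf]) - \muf(\bmuf[\muf]) = 0 \eqsp,
$$
where the final equality is the conclusion of the previous paragraph. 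Since $\muf \in \meas{1,\muf}(\Tset)$, taking the infimum over $\zeta$ yields \eqref{eq:inf:fixed}.

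For the equality case, assume $\zeta \in \meas{1,\muf}(\Tset)$ satisfies $\Psif(\zeta) = \Psif(\muf)$. The set $\meas{1,\muf}(\Tset)$ is convex, and by \eqref{eq:inf:fixed} both $\muf$ and $\zeta$ realise $\inf_{\zeta' \in \meas{1,\muf}(\Tset)} \Psif(\zeta')$. I would then apply \Cref{lem:fixed:repulsive:prelim:R} with $\mathsf M = \meas{1,\muf}(\Tset)$, $\zeta_1 = \muf$ and $\zeta_2 = \zeta$ to obtain $\muf \mathcal{R} \zeta$, which is the desired implication.

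The main obstacle is purely technical: justifying the integration of the tangent-line inequality and the associated Fubini exchange without extra integrability hypotheses. A priori $\falpha'(u_y)(v_y - u_y)$ need not be $\nu$-integrable and $\bmuf[\muf](\theta)$ must be checked to be finite $\muf$-almost everywhere. The first difficulty is handled by noting that the integrand $\falpha(v_y) - \falpha(u_y) - \falpha'(u_y)(v_y - u_y)$ is nonnegative, so that its integral is always well-defined in $[0,+\infty]$, and by using the finiteness of $\Psif(\muf)$ and $\Psif(\zeta)$ (the case $\Psif(\zeta) = \infty$ being trivial) to split it; the second follows from the fixed-point identity, since $\GammaAlpha(\bmuf[\muf](\theta) + \cte)$ equals a finite positive constant $\muf$-almost everywhere and $\GammaAlpha$ is $(0,+\infty)$-valued on $\Domain$. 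Carrying out these verifications under \ref{hyp:positive} alone is where I expect the real work to be.
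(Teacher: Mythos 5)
Your proposal is correct and follows essentially the same route as the paper's proof: both extract from the fixed-point identity that $\bmuf[\muf]$ is constant $\muf$-almost everywhere (you via injectivity of $\GammaAlpha$, the paper via constancy of $|\bmuf[\muf]+\cte+1/(\alpha-1)|$ together with its constant sign), both combine this with the first-order convexity inequality $\zeta(\bmuf[\muf]) - \muf(\bmuf[\muf]) \leq \Psif(\zeta)-\Psif(\muf)$ to get \eqref{eq:inf:fixed}, and both settle the equality case by applying \Cref{lem:fixed:repulsive:prelim:R} to the convex set $\meas{1,\muf}(\Tset)$. Your closing remarks on integrability and Fubini are reasonable care that the paper's own proof leaves implicit.
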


\begin{proof}
Let $\zeta\in \meas{1, \muf}(\Tset)$ be such that $\Psif(\zeta) \leq \Psif(\muf)$. We have that
\begin{align}\label{eq:infOne}
\zeta\left( \bmuf[\muf] - \muf(\bmuf[\muf]) \right) \leq \Psif(\zeta) - \Psif(\muf) \leq 0 \eqsp.
\end{align}
Furthermore, since $\muf$ is a fixed point of $\iteration$, $\GammaAlpha(\bmuf[\muf] + \cte)$, hence $|\bmuf[\muf] + \cte + 1/(\alpha-1)|$ is $\muf$-almost all constant. In addition, $\bmuf[\muf] + \cte + 1/(\alpha-1)$ is of constant sign by assumption on $\cte$. Since $\zeta\preceq \muf$, we thus deduce that
\begin{align*}
\zeta\left( \bmuf[\muf] - \muf(\bmuf[\muf]) \right) = 0 \eqsp.
\end{align*}
Combining this result with \eqref{eq:infOne} yields $\Psif(\zeta) = \Psif(\muf)$ and we recover \eqref{eq:inf:fixed}.

Finally, assume there exists $\zeta \in \meas{1, \muf}(\Tset)$ such that $\Psif(\muf) = \Psif(\zeta)$. Then, since $\meas{1, \muf}(\Tset)$ is a convex set, we have by \Cref{lem:fixed:repulsive:prelim:R} that $\muf \mathcal{R} \zeta$.
\end{proof}

We now move on to the proof of \Cref{thm:limit}.

\begin{proof}[Proof of \Cref{thm:limit}] For convenience, we define the notation $\Psif[\alpha, \Theta] (\lbd{}) \eqdef \Psif \left( \mulbd  \right)$ for all $\lbd{} \in \simplex_J$. In this proof, we will use the equivalence relation $\mathcal{R}$ defined by: $\zeta \mathcal{R} \zeta'$ if and only if $\zeta K=\zeta' K$ and we write $\meas{1, \zeta}(\Tset)$ the set of probability measures dominated by $\zeta$.

\begin{enumerateList}

\item \label{eq:thm:one} \textit{Any possible limit of convergent subsequence of $(\lbd{n})_{n\in\nset^\star}$ is a fixed point of $\iteration^{\mathrm{mixt}}$. }

First note that by \ref{hyp:compact}, we have that $|\Psif[\alpha, \Theta](\lbd{})| < \infty$ and that \eqref{eq:admiss} is satisfied for all $\mulbd$ such that $\lbd{} \in \simplex_J$. This means that the sequence $(\lbd{n})_{n\in\nset^\star}$ defined by \eqref{eq:iteration:mixture} is well-defined, that the sequence $(\Psif[\alpha, \Theta](\lbd{n}))_{n\in\nset^\star}$ is lower-bounded and that $\Psif[\alpha, \Theta](\lbd{n})$ is finite for all $n\in\nset^\star$. As $(\Psif[\alpha, \Theta](\lbd{n}))_{n\in\nset^\star}$ is nonincreasing by \Cref{thm:admiss}-\ref{item:mono1Prev}, it converges in $\mathbb{R}$ and in particular we have
$$
  \lim \limits_{n\to \infty} \Psif[\alpha, \Theta] \circ \iteration^{\mathrm{mixt}}(\lbd{n}) - \Psif[\alpha, \Theta](\lbd{n}) = 0 \eqsp.
$$
Let $(\lbd{\varphi(n)} )_{n\in\nset^\star}$ be a convergent subsequence of $(\lbd{n})_{n\in\nset^\star}$ and denote by $\bar{\lbd{}}$ its limit. Since the function $\lbd{} \mapsto \Psif[\alpha, \Theta] \circ \iteration^{\mathrm{mixt}}(\lbd{}) - \Psif[\alpha, \Theta] (\lbd{})$ is continuous we obtain that $\Psif[\alpha, \Theta] \circ \iteration^{\mathrm{mixt}}(\bar{\lbd{}}) =\Psif[\alpha, \Theta] (\bar{\lbd{}})$ and hence by \Cref{thm:admiss}-\ref{item:mono2Prev}, $\bar{\lbd{}}$ is a fixed point of $\iteration^{\mathrm{mixt}}$.

\item \label{eq:thm:two}  \textit{The set  $F = \lrc{\lbd{} \in \simplex_J \eqsp : \eqsp \lbd{} = \iteration^{\mathrm{mixt}}(\lbd{})}$ of fixed points of  $\iteration^{\mathrm{mixt}}$ is finite}.

For any subset $R\subset \lrcb{1,\ldots,J}$, define
\begin{align*}
  \simplex_{J,R}&=\lrc{ \lbd{} \in \simplex_J \eqsp : \eqsp \forall i \in R^c, \lambda_i = 0,\forall j \in R^c, \lambda_j \neq 0}\eqsp,\\
  \tilde \simplex_{J,R}&=\lrc{ \lbd{} \in \simplex_J \eqsp : \eqsp \forall i \in R^c, \lambda_i = 0}\eqsp,
\end{align*}
and write $$F=\bigcup \limits_{R \subset \{ 1,  \ldots , J \}} (S_{J,R} \cap F)\eqsp.$$
In order to show that $F$ is finite, we prove by contradiction that for any $R \subset \{ 1,  \ldots , J \}$, $S_{J,R} \cap F$ contains at most one element. Assume indeed the existence of two distinct elements $\lbd{}\neq  \lbdp{}$ belonging to $ S_{J,R} \cap F$. Since $\meas{1,\mulbd[\lbd{}]}(\Tset)=\meas{1,\mulbd[\lbdp{}]}(\Tset)=\lrcb{\mulbd[\lbdpp{}]\eqsp: \lbdpp{} \in \tilde \simplex_{J,R}}$, \Cref{lem:fixed:point:inf} implies that
\begin{equation*}
\Psif[\alpha,\Theta](\lbd{}) = \inf_{\lbdpp{} \in \tilde \simplex_{J,R} } \Psif[\alpha,\Theta]\lr{\lbdpp{}}=\Psif[\alpha,\Theta](\lbdp{})  \eqsp.
\end{equation*}
Applying again \Cref{lem:fixed:point:inf}, we get $\mulbd[\lbd{}] \mathcal{R} \mulbd[\lbdp{}]$, that is, $\mulbd K=\mulbd[\lbdp{}]K$. This means that $\sum_{j=1}^J (\lambda_j-\lambda'_j)K(\theta_j,\cdot)$ is the null measure, which in turns implies the identity $\lbd{}=\lbdp{}$ since the family of measures $\lrcb{K(\theta_1,\cdot),\ldots,K(\theta_J,\cdot)}$ is assumed to be linearly independent.

\item \textit{Conclusion.}

According to \Cref{lem:fixed:repulsive:prelim:R} applied to the convex subset of measures $\mathsf{M}=\simplex_{J}$, the function $\Psif[\alpha,\Theta]$ attains its global infimum at a unique $\lbd{\star}\in \simplex_{J}$. The uniqueness of $\lbd{\star}$ actually follows from the fact that, as shown above, $\mulbd[\lbd{}] \mathcal{R} \mulbd[\lbdp{}]$ if and only if $\lbd{}=\lbdp{}$.
Then, by \Cref{thm:admiss}-\ref{item:mono1Prev} and by definition of $\lbd{\star}$
$$
\Psif[\alpha, \Theta] \circ \iteration^{\mathrm{mixt}}({\lbd{\star}}) \leq \Psif[\alpha, \Theta]({\lbd{\star}}) = \inf_{\lbd{}' \in \simplex_J} \Psif[\alpha, \Theta](\lbd{}')  \leq \Psif[\alpha, \Theta] \circ \iteration^{\mathrm{mixt}}({\lbd{\star}})\eqsp,
$$
and hence, $\Psif[\alpha, \Theta] \circ \iteration^{\mathrm{mixt}}({\lbd{\star}}) =\Psif[\alpha, \Theta]({\lbd{\star}})$, showing that $\lbd{\star} \in F$ by \Cref{thm:admiss}-\ref{item:mono2Prev}. Since by \ref{eq:thm:two}, $F$ is finite, there exists $L\geq 1$ such that $F=\lrcb{ \lbd{}^\ell \eqsp: 1 \leq \ell \leq L} $, where for $i\neq j$, $\lbd{}^i\neq \lbd{}^j$. Without any loss of generality, we set $\lbd{}^1=\lbd{\star}$ to simplify the notation.

We now introduce a sequence $(W_{\ell})_{1 \leq {\ell} \leq L}$ of disjoint open neighborhoods of $(\lbd{}^\ell)_{1\leq \ell \leq  L}$ such that for any $\ell \in \{1,\ldots,L\}$,
\begin{equation} \label{eq:nojump}
  \iteration^{\mathrm{mixt}}(W_{\ell}) \cap \lr{\bigcup_{j \neq \ell} W_{j}}=\emptyset
\end{equation}
This is possible since $\iteration^{\mathrm{mixt}}(\lbd{}^{\ell}) = \lbd{}^{\ell}$ and $\lbd{} \mapsto \iteration^{\mathrm{mixt}}(\lbd{})$ is continuous.

By \ref{eq:thm:one} , the set $F$ contains all the possible limits of any subsequence of $(\lbd{n})_{n\in\nset^\star}$. As a consequence, there exists $N > 0$ such that for all $n \geq N$, $\lbd{n} \in \bigcup_{1\leq {\ell}\leq L} W_{\ell}$. Combining with \eqref{eq:nojump}, there exists $\ell \in \{1, \ldots ,L \}$ such that for all $n \geq N$, $\lbd{n} \in W_{\ell}$. Therefore $\lbd{}^\ell$ is the only possible limit of any convergent subsequence of $(\lbd{n})_{n\in\nset^\star}$ and as a consequence, $\lim_{n\to\infty}\lbd{n}=\lbd{}^\ell$.

Thus, the sequence $(\mu_{\lbd{n},\Theta})_{n\in\nset^\star}$ weakly converges to $\mu_{\lbd{}^\ell,\Theta}$ as $n \to \infty$ and \Cref{thm:repulsive} can be applied. Since $\lbd{1} \in \simplex_J^+$, we have $\meas{1, \mu_{\lbd{1},\Theta}}(\Tset)=\lrcb{\mu_{\lbdp{},\Theta}\eqsp:\lbdp{} \in \simplex_J}$ and \Cref{thm:repulsive}-\ref{item:rep2} then shows that $\mu_{\lbd{}^\ell,\Theta}$ is the global arginf of $\Psif$ over all $\lrcb{\mu_{\lbdp{},\Theta}\eqsp:\lbdp{} \in \simplex_J}$. Therefore, $\ell=1$, i.e., $\lbd{}^\ell=\lbd{}^1=\lbd{\star}$ and
$$
\Psif[\alpha,\Theta](\lbd{\star})= \inf_{\lbd{}' \in \simplex_J} \Psif[\alpha, \Theta](\lbd{}') \eqsp.
$$
\end{enumerateList}
\end{proof}

\subsection{The Power Descent for mixture models: practical version}
\label{subsec:aeiprac}

The algorithm below provides one possible approximated version of the Power Descent algorithm, where we have set $\GammaAlpha(v) = [(\alpha-1)v+1]^{\frac{\eta}{1-\alpha}}$ with $\eta \in (0,1]$.

\SetInd{0.6em}{-1.8em}
\begin{algorithm}[!h]
\caption{{\em Practical version of the Power Descent for mixture models}}
\label{algo:mixture}
\textbf{Input:} $p$: measurable positive function, $K$: Markov transition kernel, $M$: number of samples, $\Theta = \{\theta_1, \ldots, \theta_J\} \subset \Tset$: parameter set, $\GammaAlpha(v) = [(\alpha-1)v+1]^{\frac{\eta}{1-\alpha}}$ with $\eta \in (0,1]$, $N$: total number of iterations. \\
\textbf{Output:} Optimised weights $\lbd{}$. \\

Set $\lbd{} = [\lambda_{1, 1}, \ldots, \lambda_{J,1}]$.\\
\For{$n = 1 \ldots N$}{
\begin{enumerate}[label={}]
\item \underline{Sampling step} : \setlength{\parindent}{1cm} Draw independently $M$ samples $Y_{1}, \ldots, Y_{M}$ from $\mulbd k$.
\item \underline{Expectation step} : \setlength{\parindent}{1cm} Compute $\boldsymbol{B}_{\lbd{}} = (b_{j})_{1 \leq j \leq J}$ where for all $j = 1 \ldots J$ 
\begin{align*}
b_{j} = \dfrac{1}{M} \sum \limits_{m=1}^M \frac{k(\theta_j, Y_{m})}{\mulbd k(Y_{m})}  \falpha'\left( \frac{\mulbd k(Y_{m})}{p(Y_{m})} \right)
\end{align*} 

\noindent and deduce $\boldsymbol{W}_{\lbd{}}  = (\lambda_j \GammaAlpha(b_{j} + \cte))_{1\leq j\leq J}$ and $w_{\lbd{}}  = \sum_{j=1}^J \lambda_j \GammaAlpha(b_{j} + \cte)$.
\item \underline{Iteration step} : \setlength{\parindent}{1cm} Set
\begin{align*}
\lbd{} \leftarrow \frac{1}{w_{\lbd{}} } \boldsymbol{W}_{\lbd{}}
\end{align*}
\end{enumerate}
}
\end{algorithm}

\section{}
\subsection{Proof of \Cref{lem:extenstionAlpha1}}
\label{sec:FirstLemma}

We first state \ref{hypLimAlpha1}, which summarises the necessary convergence and differentiability assumptions needed in the proof of \cref{lem:extenstionAlpha1}.

\begin{hypD}{D}
\item
\begin{enumerate}[label=(\roman*)]
\item  we have $\mathlarger\int_\Yset \sup \limits_{\theta \in \Tset} k(\theta ,y) \times \sup \limits_{\theta ' \in \Tset } \lr{ \frac{k(\theta ',y)}{p(y)}}^{\alpha-1}  \nu(\rmd y)<\infty$; \label{hypD:1}
\item we have $\mathlarger\int_\Yset \sup \limits_{\theta \in \Tset} k(\theta ,y) \times \sup \limits_{\theta ' \in \Tset } \left| \log \left(\frac{k(\theta' ,y)}{p(y)} \right)  \right| \times \sup \limits_{\theta '' \in \Tset } \lr{ \frac{k(\theta '',y)}{p(y)}}^{\alpha-1} \nu(\rmd y)<\infty$; \label{hypD:2}
\item we have $\mathlarger\int_\Yset \inf \limits_{\theta \in \Tset} k(\theta ,y) \times \inf \limits_{\theta ' \in \Tset } \lr{ \frac{k(\theta ',y)}{p(y)}}^{\alpha-1}  \nu(\rmd y) > 0$. \label{hypD:3}
\end{enumerate} \label{hypLimAlpha1}
\end{hypD}

Note that these assumptions are mild if we assume that $\Tset$ is a compact metric space, which is generally the case. Assumption \ref{hypLimAlpha1}-\ref{hypD:3} is only required when $\alpha > 1$ to ensure that the quantity $[(\alpha-1)(\bmuf+\cte) +1]^{\frac{\eta}{1-\alpha}}$ is bounded from above. This assumption could also be replaced by the assumption that $\cte$ is such that $(\alpha-1) \cte > 0$.

\begin{proof}[Proof of \cref{lem:extenstionAlpha1}]
For all $\theta \in \Tset$, the Dominated Convergence Theorem and \ref{hypLimAlpha1}-\ref{hypD:1} yield
$$
\lim_{\alpha \to 1} (\alpha-1) (\bmuf(\theta)+\kappa) +1 = \lim_{\alpha \to 1} \int_\Yset k(\theta,y) \left(\frac{\mu k(y)}{p(y)}\right)^{\alpha-1} \nu(\rmd y)+0 = 1 \eqsp.
$$
Then, using \ref{hypLimAlpha1}-\ref{hypD:2} we have that for all $\theta \in \Tset$,
\begin{align*}
\lim_{\alpha \to 1} \left[(\alpha-1)(\bmuf(\theta) + \cte) +1 \right]^{\frac{\eta}{1-\alpha}}&= \exp\left( \lim_{\alpha \to 1} - \eta \frac{\log\left[(\alpha-1)(\bmuf(\theta) + \cte) +1\right]}{\alpha-1}  \right) \\
&= \exp \left(\lim_{\alpha \to 1} - \eta \frac{\int_\Yset k(\theta,y)  \left( \frac{\mu k(y)}{p(y)} \right)^{\alpha-1} \log \left(\frac{\mu k(y)}{p(y)} \right)\nu(\rmd y) + \cte}{\int_\Yset k(\theta,y) \left( \frac{\mu k(y)}{p(y)} \right)^{\alpha-1} \nu(\rmd y) + (\alpha-1)\cte}  \right) \\
& = \exp \left[- \eta \int_\Yset k(\theta,y) \log \left(\frac{\mu k(y)}{p(y)} \right) \nu(\rmd y) \right] \exp \lr{-\eta \cte}
\end{align*}
In addition, by the Dominated Convergence Theorem (and  \ref{hypLimAlpha1}-\ref{hypD:3} when $\alpha > 1$), we have
$$
\lim_{\alpha \to 1} \mu \left([(\alpha-1)(\bmuf+\cte)+1]^{\frac{\eta}{1-\alpha}} \right) = \mu\left(  \exp \left[- \eta \int_\Yset k(\cdot,y) \log \left(\frac{\mu k(y)}{p(y)} \right) \nu(\rmd y)  \right] \right) \exp \lr{-\eta \cte} \eqsp.
$$
Thus,
$$
\lim_{\alpha\to 1} [\iteration (\mu)](h) = \int_\Tset \frac{\mu(\rmd \theta) h(\theta) e^{- \eta \int_\Yset k(\theta,y) \log \left(\frac{\mu k(y)}{p(y)} \right) \nu(\rmd y)}}{ \mu\left(  e^{- \eta \int_\Yset k(\cdot,y) \log \left(\frac{\mu k(y)}{p(y)} \right) \nu(\rmd y)} \right) } = [\iteration[1](\mu)](h) \eqsp.
$$
\end{proof}

\subsection{Derivation of the update formula for the Renyi Descent}
\label{sec:renyiD}

For all $\alpha \in \rset \setminus \lrcb{0, 1}$ and $\cte$ such that $(\alpha-1)\cte \geq 0$, we are interested applying the Entropic Mirror Descent algorithm to the following objective function
\begin{align*}
\PsifAR(\mu) &\eqdef \frac{1}{\alpha(\alpha-1)} \log \lr{\int_\Yset \mu k(y)^{\alpha} p(y)^{1-\alpha} \nu(\rmd y) + (\alpha-1) \cte}
\end{align*}

\begin{lem} Assume \ref{hyp:positive}. The gradient of $\PsifAR(\mu)$ is given by $\theta \mapsto \frac{\bmuf[\mu](\theta) + 1/(\alpha-1)}{(\alpha-1)(\mu(\bmuf[\mu]) + \cte) + 1}$.
\end{lem}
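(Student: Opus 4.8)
The plan is to compute the first variation of $\PsifAR$ along line segments in $\meas{1}(\Tset)$, which is the sense in which $\bmuf$ is understood to be the gradient of $\Psif$ in Algorithm \ref{algo:aei}. Concretely, for $\mu, \zeta \in \meas{1}(\Tset)$ I set $\mu_t = (1-t)\mu + t\zeta$ and look for a function $g_\mu$ on $\Tset$ such that $\frac{\rmd}{\rmd t}\PsifAR(\mu_t)\big|_{t=0} = \zeta(g_\mu) - \mu(g_\mu)$. Since such a $g_\mu$ is determined only up to an additive constant, it suffices to exhibit one representative equal to the claimed expression $\theta \mapsto \frac{\bmuf(\theta) + 1/(\alpha-1)}{(\alpha-1)(\mu(\bmuf)+\cte)+1}$.

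First I would isolate the inner quantity $I(\mu) \eqdef \int_\Yset (\mu k(y))^\alpha p(y)^{1-\alpha} \nu(\rmd y) + (\alpha-1)\cte$, so that $\PsifAR(\mu) = \frac{1}{\alpha(\alpha-1)} \log I(\mu)$, and differentiate $t \mapsto I(\mu_t)$ at $t=0$. Using $\mu_t k(y) = (1-t)\mu k(y) + t\,\zeta k(y)$ and differentiating under the integral sign yields $\frac{\rmd}{\rmd t} I(\mu_t)\big|_{t=0} = \alpha \int_\Yset (\mu k(y))^{\alpha-1} p(y)^{1-\alpha} \lr{\zeta k(y) - \mu k(y)} \nu(\rmd y)$.

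The crux is then a Fubini identity linking this integral to $\bmuf$. Since $\falpha'(u) = (u^{\alpha-1}-1)/(\alpha-1)$ for $\alpha \in \rset \setminus \lrcb{0,1}$ and $\int_\Yset k(\theta,y)\nu(\rmd y) = 1$, one checks directly that $\int_\Yset k(\theta,y) (\mu k(y))^{\alpha-1} p(y)^{1-\alpha} \nu(\rmd y) = (\alpha-1)\lr{\bmuf(\theta) + 1/(\alpha-1)}$. Integrating this against $\zeta$ and against $\mu$ and interchanging the order of integration turns the derivative of $I$ into $\alpha(\alpha-1)\lrb{\zeta\lr{\bmuf + 1/(\alpha-1)} - \mu\lr{\bmuf + 1/(\alpha-1)}}$; applying the same identity with $\zeta = \mu$ also gives $I(\mu) = (\alpha-1)\mu(\bmuf) + 1 + (\alpha-1)\cte = (\alpha-1)(\mu(\bmuf)+\cte)+1$, which is exactly the denominator in the claim. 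Finally, the chain rule for $\log$ gives $\frac{\rmd}{\rmd t}\PsifAR(\mu_t)\big|_{t=0} = \frac{1}{\alpha(\alpha-1) I(\mu)}\frac{\rmd}{\rmd t} I(\mu_t)\big|_{t=0}$; substituting the two computations and noting that the additive constant $1/(\alpha-1)$ cancels in the difference $\zeta(\cdot) - \mu(\cdot)$ identifies the representative $g_\mu(\theta) = \frac{\bmuf(\theta)+1/(\alpha-1)}{(\alpha-1)(\mu(\bmuf)+\cte)+1}$.

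The main obstacle is purely analytic rather than structural: justifying the differentiation under the integral sign at $t=0$ and the Fubini interchanges. For this I would bound the difference quotients of $t \mapsto (\mu_t k(y))^\alpha$ uniformly for $t$ in a neighbourhood of $0$ and dominate them using \ref{hyp:positive} together with the integrability of the $(\mu k(y))^{\alpha-1}$-type integrals that is implicit in $\Psif(\mu) < \infty$ and in the boundedness hypotheses already in force; convexity of $t \mapsto (\mu_t k(y))^\alpha$ for $\alpha \notin (0,1)$ (resp.\ concavity otherwise) makes the monotone/dominated convergence argument routine once a dominating function is in hand.
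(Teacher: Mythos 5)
Your proposal is correct and follows essentially the same route as the paper's proof: a first-order perturbation of $\PsifAR$ (the paper expands $\PsifAR(\mu+\varepsilon\mu')$ using $(1+u)^\alpha = 1+\alpha u + o(u)$ and $\log(1+u)=u+o(u)$, while you differentiate $I(\mu_t)$ along $\mu_t=(1-t)\mu+t\zeta$ and apply the chain rule, which is the same linearisation), combined with the same Fubini identity $\int_\Yset k(\theta,y)\lr{\mu k(y)/p(y)}^{\alpha-1}\nu(\rmd y) = (\alpha-1)\lr{\bmuf(\theta)+1/(\alpha-1)}$ and its integrated form identifying the denominator. The only differences are cosmetic (convex-combination versus additive perturbation, and your more explicit attention to dominated-convergence justifications, which the paper leaves informal).
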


\begin{proof} Let $\varepsilon > 0$ be small and let $\mu, \mu' \in \meas{1}(\Tset)$. Then,
\begin{align*}
\PsifAR(\mu + \varepsilon \mu') &= \frac{1}{\alpha(\alpha-1)} \log \lr{\int_\Yset [(\mu + \varepsilon \mu') k(y)]^{\alpha} p(y)^{1-\alpha} \nu(\rmd y) + (\alpha-1) \cte} \\
&= \frac{1}{\alpha(\alpha-1)} \log \lr{\int_\Yset \mu k(y)^\alpha \left[1 + \alpha \varepsilon \frac{\mu' k(y)}{\mu k(y)}\right] p(y)^{1-\alpha} \nu(\rmd y) + (\alpha-1) \cte + o(\varepsilon)}
\end{align*}
where we used that $(1+u)^{\alpha} = 1 +\alpha u + o(u)$ as $u \to 0$. Thus,
\begin{align*}
\PsifAR(\mu + \varepsilon \mu') & = \PsifAR(\mu) + \frac{1}{\alpha(\alpha-1)} \log \lr{ 1 + \alpha \varepsilon \frac{\int_\Yset \mu' k(y) \left( \frac{\mu k(y)}{p(y)} \right)^{\alpha-1} \nu(\rmd y) }{\int_\Yset \mu k(y)^\alpha p(y)^{1-\alpha} \nu(\rmd y) + (\alpha-1) \cte} + o(\varepsilon)} \\
&= \PsifAR(\mu) + \varepsilon \frac{1}{\alpha-1} \frac{\int_\Yset \mu' k(y) \left( \frac{\mu k(y)}{p(y)} \right)^{\alpha-1} \nu(\rmd y)}{\int_\Yset \mu k(y)^\alpha p(y)^{1-\alpha} \nu(\rmd y) + (\alpha-1) \cte} + o(\varepsilon) \\
&= \PsifAR(\mu) + \varepsilon \int_\Tset \mu'(\rmd \theta) \frac{1}{\alpha-1} \frac{\bmuf[\mu](\theta) + 1/(\alpha-1)}{\mu(\bmuf[\mu]) + \cte + 1/(\alpha-1)} + o(\varepsilon)
\end{align*}
using that $\log(1+u) = u + o(u)$ as $u \to 0$.
\end{proof}
Consequently, the iterative update formula for the Entropic Mirror Descent applied to the objective function $\PsifAR$ is given by
\begin{align*}
\mu_{n+1}(\rmd \theta) = \mu_n(\rmd \theta) \frac{e^{- \frac{\eta}{\alpha-1} \frac{\bmuf[\mu_n](\theta)}{\mu_n(\bmuf[\mu_n]) + \cte  + 1/(\alpha-1)}} }{\mu_n(e^{- \frac{\eta}{\alpha-1} \frac{\bmuf[\mu_n]}{\mu_n(\bmuf[\mu_n]) + \cte  + 1/(\alpha-1)}})} \eqsp, \quad n\in\nstar \eqsp.
\end{align*}

\subsection{Proof of \Cref{thm:Renyi}}
\label{sec:proofThmRenyi}

As we shall see, the proof can be adapted from the proof of \cite[Theorem 2]{daudel2020infinitedimensional}. For all $\mu \in \meas{1}(\Tset)$, we will use the notation
$$
\iteration^{AR}(\mu)(\rmd \theta) = \frac{\mu(\rmd \theta)  \exp \lrb{- \eta \frac{\bmuf(\theta)}{(\alpha-1)(\mu(\bmuf) + \cte) + 1} } }{\mu \lr{\exp \lrb{- \eta \frac{\bmuf}{(\alpha-1)(\mu_n(\bmuf) + \cte) + 1}}}} \eqsp
$$
to designate the one-step transition of the Renyi Descent algorithm. Note in passing that for all $\cte' \in \rset$, this definition can also be rewritten under the form
$$
\iteration^{AR}(\mu)(\rmd \theta) = \frac{\mu(\rmd \theta)  \exp \lrb{- \eta \frac{\bmuf(\theta)}{(\alpha-1)(\mu(\bmuf) + \cte) + 1}  + \cte' } }{\mu \lr{\exp \lrb{- \eta \frac{\bmuf}{(\alpha-1)(\mu_n(\bmuf) + \cte) + 1}  + \cte'}}} \eqsp.
$$

We also define
\begin{align} \label{eq:cteThm}
& \cteinf = \eta^{-1} \sup_{\theta \in \Tset, \mu \in \meas{1}(\Tset)} [(\alpha-1)(\bmuf(\theta) + \cte) + 1] \nonumber \\
& L = \eta^2 \sup_{v \in \Domain^{AR} }e^{-\eta v} \nonumber\\
& \ctesup = \sup_{v \in \Domain^{AR}} e^{\eta v} \nonumber\\
& \ctemono = \inf_{v \in \Domain^{AR}} \lrcb{ 1 - \eta (\alpha -1) (v-\cte')} \times \eta \inf_{v \in \Domain^{AR}}   e^{- \eta v} \eqsp.
\end{align}

\subsubsection{Recalling \cite[Lemma 5]{daudel2020infinitedimensional}}

Let $(\zeta,\mu)$ be a couple of probability measures where $\zeta$ is dominated by $\mu$ which we denote by $\zeta \preceq \mu$ and define
\begin{align}\label{eq:Aalpha}
  A_\alpha \eqdef \int_\Yset  \nu(\rmd y)  \int_\Tset \mu(\rmd \theta) k(\theta,y)  \falpha' \left( \frac{g(\theta)\mu k(y) }{p(y)} \right) \left[ 1 - g(\theta) \right]\eqsp,
\end{align}
where $g$ is the density of $\zeta$ w.r.t $\mu$, i.e. $\zeta(\rmd
\theta)=\mu(\rmd \theta) g(\theta)$. We recall \cite[Lemma 5]{daudel2020infinitedimensional} in \Cref{lem:fondam} below.

\begin{lem}{\cite[Lemma 5]{daudel2020infinitedimensional}}\label{lem:fondam}
Assume \ref{hyp:positive}. Then, for all $\mu,\zeta\in\meas{1}(\Tset)$ such that $\zeta \preceq \mu$ and $\Psif(\mu) < \infty$, we have
\begin{equation} \label{eq:bound:fondam}
A_\alpha \leq  \Psif(\mu) - \Psif(\zeta) \eqsp.
\end{equation}
Moreover, equality holds in \eqref{eq:bound:fondam} if and only if $\zeta=\mu$.
\end{lem}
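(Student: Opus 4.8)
The plan is to reduce the claimed inequality \eqref{eq:bound:fondam} to a pointwise-in-$y$ statement and then to invoke the convexity of $\falpha$ twice. First I would fix $y \in \Yset$ and set $c_y = \mu k(y)/p(y)$, which is strictly positive because $k(\theta,y) > 0$ for all $\theta$ under \ref{hyp:positive} and $\mu$ is a probability measure. I would then introduce the tilted probability measure $m_y(\rmd \theta) = \mu(\rmd \theta) k(\theta,y)/\mu k(y)$ on $(\Tset,\Tsigma)$, under which $\zeta k(y)/p(y) = c_y \int_\Tset m_y(\rmd\theta)\, g(\theta)$. Since $\mu(\rmd\theta) k(\theta,y) = p(y)\, c_y\, m_y(\rmd\theta)$, integrating against $p(y)\nu(\rmd y)$ identifies the two sides of \eqref{eq:bound:fondam} with the $p\nu$-integral of, respectively, $\falpha(c_y) - \falpha(c_y\int_\Tset m_y g)$ and $c_y\int_\Tset m_y(\rmd\theta)\,\falpha'(c_y g(\theta))(1-g(\theta))$; so the whole lemma follows from the pointwise bound
\[
\falpha(c_y) - \falpha\lr{c_y \int_\Tset m_y(\rmd\theta)\, g(\theta)} \geq c_y \int_\Tset m_y(\rmd\theta)\, \falpha'(c_y g(\theta))\,(1 - g(\theta)) \eqsp.
\]

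To prove this pointwise inequality I would work with the auxiliary function $\psi_y(s) = \falpha(c_y s)$, which is convex on $[0,\infty)$ since $c_y \geq 0$, and whose derivative is $\psi_y'(s) = c_y\falpha'(c_y s)$; the right-hand side above is then exactly $\int_\Tset m_y(\rmd\theta)\,\psi_y'(g(\theta))(1-g(\theta))$. The tangent-line inequality for $\psi_y$ at the point $g(\theta)$, evaluated at $1$, gives $\psi_y'(g(\theta))(1-g(\theta)) \leq \psi_y(1) - \psi_y(g(\theta))$ for every $\theta$; integrating against $m_y$ and then applying Jensen's inequality $\int_\Tset m_y(\rmd\theta)\,\psi_y(g(\theta)) \geq \psi_y\lr{\int_\Tset m_y(\rmd\theta)\, g(\theta)}$ chains into the desired bound, using $\psi_y(1) = \falpha(c_y)$ and $\psi_y\lr{\int_\Tset m_y g} = \falpha(\zeta k(y)/p(y))$.

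For the equality case I would track when both steps are tight. Writing $\Psif(\mu) - \Psif(\zeta) - A_\alpha$ as the $p\nu$-integral of the nonnegative pointwise gap, equality in \eqref{eq:bound:fondam} forces that gap to vanish for $p\nu$-a.e. $y$, hence equality in both the Jensen and the tangent-line steps. Because $\falpha$ is strictly convex and $c_y > 0$, $\psi_y$ is strictly convex, so the tangent-line equality $\psi_y(1) = \psi_y(g(\theta)) + \psi_y'(g(\theta))(1-g(\theta))$ holds only when $g(\theta) = 1$; thus $g = 1$ holds $m_y$-a.e. Unwinding $m_y$, this reads $\int_\Tset \mu(\rmd\theta)k(\theta,y)\indiacc{g(\theta)\neq 1} = 0$ for $\nu$-a.e. $y$ (using $p>0$); integrating in $y$ and applying Tonelli together with $\int_\Yset k(\theta,y)\nu(\rmd y) = 1$ gives $\mu(g\neq 1) = 0$, i.e. $\zeta = \mu$. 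The converse is immediate since $\zeta=\mu$ makes $g\equiv 1$ and both sides of \eqref{eq:bound:fondam} vanish.

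The main obstacle I anticipate is not the convexity argument, which is clean, but the measure-theoretic bookkeeping: ensuring $m_y$ is well defined and that the integrals make sense (the hypothesis $\Psif(\mu)<\infty$ enters here), handling the set $\lrcb{g = 0}$ where $\falpha'(c_y g(\theta))$ may diverge so that the right-hand side of \eqref{eq:bound:fondam} could equal $-\infty$ (harmless for a lower bound but to be stated carefully), and justifying the Fubini--Tonelli exchanges that pass between the $y$-pointwise inequality and the double integral defining $A_\alpha$ in \eqref{eq:Aalpha}. These are precisely the places where \ref{hyp:positive} is used.
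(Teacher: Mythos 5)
The paper itself contains no proof of \Cref{lem:fondam}: the statement is recalled verbatim from \cite[Lemma~5]{daudel2020infinitedimensional} and invoked as a black box, so there is no in-paper argument to compare yours against. Judged on its own, your proof is correct and is the natural argument: disintegrating along the tilted measure $m_y$, applying the tangent-line inequality to the convex map $s \mapsto \falpha(c_y s)$, and then Jensen's inequality gives exactly the pointwise bound that integrates to \eqref{eq:bound:fondam}, and your accounting of where \ref{hyp:positive} and $\Psif(\mu)<\infty$ enter is accurate --- in particular, the tangent-line bound together with $\falpha \geq 0$ shows that the positive part of the integrand of $A_\alpha$ has integral at most $\Psif(\mu)<\infty$, which is precisely what legitimises the Fubini--Tonelli manipulations and makes $A_\alpha$ well defined in $[-\infty,\Psif(\mu)]$. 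Two small corrections to your bookkeeping remarks. First, on $\lrcb{g=0}$ the divergence of $\falpha'(0^+)$ (which occurs for $\alpha \leq 1$) sends the \emph{left}-hand side $A_\alpha$ of \eqref{eq:bound:fondam} to $-\infty$, not the right-hand side; the right-hand side can only equal $-\infty$ through $\Psif(\zeta)=+\infty$. Second, your equality analysis, which writes $\Psif(\mu)-\Psif(\zeta)-A_\alpha$ as the integral of a nonnegative gap, implicitly requires both sides of \eqref{eq:bound:fondam} to be finite; in the degenerate situation where $A_\alpha=-\infty$ and $\Psif(\zeta)=+\infty$ hold simultaneously (which can happen with $\zeta\neq\mu$, e.g.\ $\alpha=0$ with $\mu(g=0)>0$ and $\zeta k$ too light-tailed for $p$), ``equality'' holds vacuously in the extended reals, so the ``only if'' direction must be read under a finiteness convention --- an imprecision inherited from the recalled statement itself rather than introduced by your argument.
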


\subsubsection{Adaptation of \cite[Theorem 1]{daudel2020infinitedimensional}}

\begin{lem}\label{thm:monotone}
  Assume \ref{hyp:positive} and \ref{hyp:gamma}. Let $\alpha \in \rset \setminus \lrcb{1}$, let $\cte$ be such that $(\alpha-1)\cte \geq 0$ and let $\mu \in \meas{1}(\Tset)$ be such that
\begin{align}\label{eq:admissRenyi}
0 <\mu \lrcb{\exp \lr{-\eta \frac{\bmuf + 1/(\alpha-1)}{(\alpha-1)(\mu(\bmuf) + \cte) + 1}}}<\infty \eqsp
\end{align}
 holds and $\Psif(\mu)<\infty$. Then, the two following assertions hold.
\begin{enumerate}[label=(\roman*)]
\item \label{item:mono1} We have  $\Psif \circ \iteration^{AR} (\mu) \leq \Psif(\mu)$.
\item \label{item:mono2} We have $\Psif \circ \iteration^{AR} (\mu) =\Psif(\mu)$ if and only if $\mu=\iteration^{AR} (\mu)$.
\end{enumerate}
\end{lem}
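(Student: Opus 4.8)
The plan is to exploit \Cref{lem:fondam} by taking $\zeta = \iteration^{AR}(\mu)$, which is dominated by $\mu$ with density $g \eqdef \rmd\zeta/\rmd\mu$; assumption \eqref{eq:admissRenyi} guarantees that $\zeta$ is a well-defined probability measure, and $\Psif(\mu)<\infty$ lets me invoke \Cref{lem:fondam} to get $A_\alpha \leq \Psif(\mu) - \Psif(\zeta)$, with equality if and only if $\zeta = \mu$. Both assertions then reduce to the single inequality $A_\alpha \geq 0$: item \ref{item:mono1} follows at once, and for item \ref{item:mono2} the identity $\Psif\circ\iteration^{AR}(\mu) = \Psif(\mu)$ forces $0 = \Psif(\mu)-\Psif(\zeta) \geq A_\alpha \geq 0$, so that $A_\alpha = \Psif(\mu)-\Psif(\zeta)$ and the equality case of \Cref{lem:fondam} yields $\zeta = \mu$ (the converse being immediate).

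The first concrete step is to put $A_\alpha$ in closed form. Using $\falpha'(u) = (u^{\alpha-1}-1)/(\alpha-1)$ (valid for every $\alpha \neq 1$), the normalisation $\int_\Yset k(\theta,y)\nu(\rmd y) = 1$, the identity $\int_\Yset k(\theta,y)(\mu k(y)/p(y))^{\alpha-1}\nu(\rmd y) = (\alpha-1)\bmuf(\theta)+1$, and $\mu(1-g)=0$ since both $\zeta$ and $\mu$ are probability measures, I would reduce \eqref{eq:Aalpha} to
\[
A_\alpha = \int_\Tset \mu(\rmd\theta)\,(1-g(\theta))\,g(\theta)^{\alpha-1}\left(\bmuf(\theta)+\tfrac{1}{\alpha-1}\right)\eqsp.
\]

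The crux, and the step I expect to be the main obstacle, is to show that this integral is nonnegative. Writing $D \eqdef (\alpha-1)(\mu(\bmuf)+\cte)+1$ and $\xi(\theta)\eqdef (\bmuf(\theta)+1/(\alpha-1))/D$, I would first check that $D>0$: indeed $(\alpha-1)\mu(\bmuf)+1 = \int_\Yset \mu k(y)\,(\mu k(y)/p(y))^{\alpha-1}\nu(\rmd y) > 0$ under \ref{hyp:positive}, and $(\alpha-1)\cte \geq 0$. Next, the Renyi transition reads $g = e^{-\eta\xi}/C$ with $C = \mu(e^{-\eta\xi})$, the additive constant $\cte'$ cancelling in the normalisation. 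Substituting $\bmuf(\theta)+1/(\alpha-1) = D\,\xi(\theta)$ and $g^{\alpha-1} = e^{-\eta(\alpha-1)\xi}/C^{\alpha-1}$, and using $e^{-\eta\alpha\xi} = e^{-\eta\xi}e^{-\eta(\alpha-1)\xi}$, a direct computation rewrites the integral as a covariance:
\[
A_\alpha = \frac{D}{C^{\alpha}}\left[\mu(e^{-\eta\xi})\,\mu\!\left(\xi\,e^{-\eta(\alpha-1)\xi}\right) - \mu\!\left(\xi\,e^{-\eta\alpha\xi}\right)\right] = -\frac{D}{C^{\alpha}}\,\Cov_\mu\!\left(e^{-\eta\xi},\,\xi\,e^{-\eta(\alpha-1)\xi}\right)\eqsp.
\]

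It then remains to control the sign of this covariance, and this is exactly where \ref{hyp:gamma} enters. The map $\xi \mapsto e^{-\eta\xi}$ is non-increasing, while $\xi \mapsto \xi\,e^{-\eta(\alpha-1)\xi}$ has derivative $e^{-\eta(\alpha-1)\xi}\,[\,1-\eta(\alpha-1)\xi\,]$, which is nonnegative precisely under \ref{hyp:gamma}: by construction of $\Domain^{AR}$ one has $\xi(\theta)+\cte' \in \Domain^{AR}$, so that $1-\eta(\alpha-1)\xi(\theta)\geq 0$ and this second map is non-decreasing on the range of $\xi$. Being oppositely monotone functions of the same real random variable $\xi$ (drawn under $\mu$), they are negatively correlated, whence $\Cov_\mu(\cdot,\cdot)\leq 0$ and $A_\alpha \geq 0$. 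The delicate point throughout is that \ref{hyp:gamma} is exactly the hypothesis that makes $\xi\,e^{-\eta(\alpha-1)\xi}$ monotone, so that the association inequality applies uniformly over $\theta$; without it the sign of $A_\alpha$ could not be controlled and the monotonicity would fail.
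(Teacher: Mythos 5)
Your proposal is correct and follows essentially the same route as the paper: reduce both assertions to $A_\alpha \geq 0$ via \Cref{lem:fondam} with $\zeta = \iteration^{AR}(\mu)$, compute the same closed form $A_\alpha = \int_\Tset \mu(\rmd\theta)\,(1-g)\,g^{\alpha-1}\bigl(\bmuf + \tfrac{1}{\alpha-1}\bigr)$, factor out the positive constant $(\alpha-1)(\mu(\bmuf)+\cte)+1$, and conclude by an association (Chebyshev covariance) inequality whose monotonicity hypothesis is exactly \ref{hyp:gamma}. The only difference is cosmetic: the paper writes the covariance in the shifted variable $V = \xi + \cte'$ as $\Cov\bigl((V-\cte')\tgamma^{\alpha-1}(V),\,1-\tgamma(V)\bigr)\geq 0$ between two nondecreasing functions, whereas you absorb $\cte'$ into the normalisation and get the equivalent statement $\Cov_\mu\bigl(e^{-\eta\xi},\,\xi\,e^{-\eta(\alpha-1)\xi}\bigr)\leq 0$ for oppositely monotone functions.
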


\begin{proof} The proof builds on the proof of \cite[Theorem 1]{daudel2020infinitedimensional} in the particular case $\alpha \in \rset \setminus \lrcb{1}$. Indeed, in this case,
\begin{align*}
A_\alpha &= \int_\Yset \nu(\rmd y) \int_\Tset \mu(\rmd \theta) k(\theta,y) \frac{1}{\alpha-1} \left[ \left( \frac{ g(\theta)\mu k(y)}{p(y)} \right)^{\alpha-1} - 1 \right] \left[ 1 - g(\theta) \right]  \\
&= \int_\Yset \nu(\rmd y) \int_\Tset \mu(\rmd \theta) k(\theta,y) \frac{1}{\alpha-1} \left( \frac{\mu k(y)}{p(y)} \right)^{\alpha-1} g(\theta)^{\alpha -1} \left[ 1 - g(\theta) \right]  \\
&= \int_\Tset \mu(\rmd \theta) \left[\bmuf(\theta) + \frac{1}{\alpha-1} \right] g(\theta)^{\alpha -1} \left[ 1 - g(\theta) \right] \eqsp.
\end{align*}
so that
\begin{align*}
A_\alpha &= [(\alpha-1) (\mu(\bmuf) + \cte) + 1] \times \int_\Tset \mu(\rmd \theta) \frac{\bmuf(\theta) + \frac{1}{\alpha-1}}{(\alpha-1) (\mu(\bmuf) + \cte) + 1} g(\theta)^{\alpha -1} \left[ 1 - g(\theta) \right]
\end{align*}
where $(\alpha-1) (\mu(\bmuf) + \cte) + 1 > 0$ under \ref{hyp:positive}. Set
$$
g=\tgamma \circ \lr{\frac{\bmuf + 1/(\alpha-1)}{(\alpha-1) (\mu(\bmuf) + \cte) + 1}}$$
where for all $v \in \Domain^{AR}$,
$$\tgamma (v) = \frac{e^{- \eta v}}{\mu \lrcb{ \exp \lr{-\eta  \frac{\bmuf + 1/(\alpha-1)}{(\alpha-1) (\mu(\bmuf) + \cte) + 1} - \eta \cte'}}}\eqsp.
$$
Finally, let us consider the probability space $(\Tset,\Tsigma,\mu)$ and let $V$ be the random variable
$$
V(\theta)= \frac{\bmuf(\theta) + 1/(\alpha-1)}{(\alpha-1) (\mu(\bmuf) + \cte) + 1} + \cte' \eqsp.
$$
Then, we have $\PE[1-\tgamma(V)] = 0$ and we can write
\begin{align}
A_\alpha &= [(\alpha-1) (\mu(\bmuf) + \cte) + 1] \times \PE[ (V-\cte') \tgamma^{\alpha-1}(V) (1-\tgamma(V))] \nonumber \\
 &= [(\alpha-1) (\mu(\bmuf) + \cte) + 1] \times \Cov( (V-\cte') \tgamma^{\alpha-1}(V), 1- \tgamma(V)) \eqsp. \label{eq:usefulNextLemma}
\end{align}
Under \ref{hyp:gamma} with $\alpha \in \rset \setminus \lrcb{1}$, $v \mapsto (v-\cte')\tgamma^{\alpha-1}(v)$ and $v \mapsto 1- \tgamma(v)$ are increasing on $\Domain^{AR}$ which implies $\Cov( V \tgamma^{\alpha-1}(V), 1- \tgamma(V)) \geq 0$ and thus $A_\alpha\geq 0$ since $(\alpha-1) (\mu(\bmuf) + \cte) + 1 > 0$.
\end{proof}

\subsubsection{Adaptation of \cite[Lemma 6]{daudel2020infinitedimensional}}

Consider the probability space $(\Tset,\Tsigma,\mu)$ and denote by $\Var_\mu$ the associated variance operator.

\begin{lem}\label{lem:mono:refined} Assume \ref{hyp:positive} and  \ref{hyp:gamma}. Let $\alpha \in \rset \setminus \lrcb{1}$, let $\cte$ be such that $(\alpha-1)\cte > 0$, and let $\mu\in\meas{1}(\Tset)$ be such that \eqref{eq:admissRenyi} holds and $\Psif(\mu)<\infty$. Then,
\begin{align}\label{eq:mono:refined}
 \frac{(\alpha-1)\cte \ctemono}{2} \Var_\mu \lr{\frac{\bmuf + 1/(\alpha -1)}{(\alpha-1) (\mu(\bmuf)+\cte)+1}} \leq \Psif(\mu) - \Psif\circ \iteration^{AR}(\mu) \eqsp,
\end{align}
where
$$
\ctemono \eqdef \inf_{v \in \Domain^{AR}} \lrcb{ 1 - \eta (\alpha -1)(v-\cte')} \times \inf_{v \in \Domain^{AR}}  \eta e^{- \eta v} \eqsp. $$
\end{lem}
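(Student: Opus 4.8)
The plan is to follow the route of \cite[Lemma 6]{daudel2020infinitedimensional}, starting from the covariance identity for $A_\alpha$ that was already produced inside the proof of \Cref{thm:monotone}. Indeed, by \eqref{eq:usefulNextLemma},
\[
A_\alpha = \lrb{(\alpha-1)(\mu(\bmuf)+\cte)+1}\times\Cov\lr{(V-\cte')\tgamma^{\alpha-1}(V),\, 1-\tgamma(V)}\eqsp,
\]
where $V(\theta)=\frac{\bmuf(\theta)+1/(\alpha-1)}{(\alpha-1)(\mu(\bmuf)+\cte)+1}+\cte'$ is regarded as a random variable on $(\Tset,\Tsigma,\mu)$ and $\tgamma$ is as in the proof of \Cref{thm:monotone}. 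Since $\iteration^{AR}(\mu)\preceq\mu$ and $\Psif(\mu)<\infty$, \Cref{lem:fondam} applied with $\zeta=\iteration^{AR}(\mu)$ (whose $\mu$-density is exactly the $g$ used there) gives $A_\alpha\le\Psif(\mu)-\Psif\circ\iteration^{AR}(\mu)$. Hence it is enough to bound the covariance above \emph{from below} by $\tfrac{(\alpha-1)\cte\,\ctemono}{2}\Var_\mu(V)$, after observing that $\Var_\mu(V)$ coincides with the variance appearing in \eqref{eq:mono:refined} because $V$ differs from $\frac{\bmuf+1/(\alpha-1)}{(\alpha-1)(\mu(\bmuf)+\cte)+1}$ only by the additive constant $\cte'$.

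The heart of the argument is therefore a quantitative lower bound on the covariance of two monotone functions of $V$ by $\Var_\mu(V)$. Writing $\phi(v)=(v-\cte')\tgamma^{\alpha-1}(v)$ and $\psi(v)=1-\tgamma(v)$, both are nondecreasing on $\Domain^{AR}$ under \ref{hyp:gamma}, with $\psi'(v)=\eta\tgamma(v)$ and $\phi'(v)=\tgamma^{\alpha-1}(v)\lrb{1-\eta(\alpha-1)(v-\cte')}$. The plan is to introduce an independent copy $V'$ of $V$ and use the representation $\Cov(\phi(V),\psi(V))=\tfrac12\PE\lrb{(\phi(V)-\phi(V'))(\psi(V)-\psi(V'))}$: since $\phi$ and $\psi$ are nondecreasing, the integrand is nonnegative, and each increment can be controlled through its derivative. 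The two scalar quantities that must surface are $\inf_{v}\lrcb{1-\eta(\alpha-1)(v-\cte')}$, coming from the factor in $\phi'$ that \ref{hyp:gamma} keeps nonnegative, and $\eta\inf_v e^{-\eta v}$, coming from $\psi'$; their product is precisely $\ctemono$, while $\tfrac12\PE[(V-V')^2]=\Var_\mu(V)$ supplies the variance. The strict condition $(\alpha-1)\cte>0$ (sharper than the $\ge 0$ of \Cref{thm:monotone}) is what ultimately keeps the prefactor bounded away from $0$.

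The step I expect to be delicate is the exact bookkeeping of the multiplicative constants, so that the prefactor collapses to $\tfrac{(\alpha-1)\cte\,\ctemono}{2}$ rather than to some looser constant. A naive bound that replaces $\phi'$ and $\psi'$ by their infima over $\Domain^{AR}$ produces spurious factors — namely the normalisation hidden in $\tgamma$ and the $\tgamma^{\alpha-1}$ term in $\phi'$ — which do not appear in $\ctemono$; avoiding them requires exploiting the structure, in particular the normalisation $\PE\lrb{1-\tgamma(V)}=0$, rather than crude derivative infima. Two further facts must be carried along: first, the prefactor $(\alpha-1)(\mu(\bmuf)+\cte)+1$ should be handled using $(\alpha-1)\cte>0$ together with the positivity $(\alpha-1)\bmuf(\theta)+1>0$, which holds under \ref{hyp:positive} since $(\alpha-1)(\bmuf(\theta)+1/(\alpha-1))=\int_\Yset k(\theta,y)\,(\mu k(y)/p(y))^{\alpha-1}\,\nu(\rmd y)>0$, so that $(\alpha-1)(\mu(\bmuf)+\cte)+1>(\alpha-1)\cte>0$; second, one must check the integrability needed to legitimate the i.i.d.-copy representation and the increment bounds, which is granted by $\Psif(\mu)<\infty$ and \eqref{eq:admissRenyi}. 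Once these constants are matched and the monotonicity from \ref{hyp:gamma} is in force, the chain $\tfrac{(\alpha-1)\cte\,\ctemono}{2}\Var_\mu(V)\le A_\alpha\le\Psif(\mu)-\Psif\circ\iteration^{AR}(\mu)$ closes the proof.
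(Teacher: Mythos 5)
Your proposal follows the paper's proof essentially step by step: the paper also starts from \eqref{eq:usefulNextLemma}, lower-bounds the prefactor via $(\alpha-1)(\mu(\bmuf)+\cte)+1 > (\alpha-1)\cte > 0$ under \ref{hyp:positive}, chains $A_\alpha \leq \Psif(\mu) - \Psif\circ\iteration^{AR}(\mu)$ through \Cref{lem:fondam} applied to $\zeta = \iteration^{AR}(\mu)$ (whose $\mu$-density is $g = \tgamma(V)$), and lower-bounds the covariance through the symmetrised representation
\begin{align*}
\Cov\lr{(V-\cte')\tgamma^{\alpha-1}(V),\, 1-\tgamma(V)} = \frac{1}{2}\,\PE\lrb{\lr{(U-\cte')\tgamma^{\alpha-1}(U)-(V-\cte')\tgamma^{\alpha-1}(V)}\lr{-\tgamma(U)+\tgamma(V)}}
\end{align*}
with $U$ an independent copy of $V$ on $(\Tset,\Tsigma,\mu)$, exactly as you propose. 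The auxiliary facts you state are all correct: the monotonicity of both factors under \ref{hyp:gamma}, the derivative formulas $\phi'(v)=\tgamma^{\alpha-1}(v)\lrb{1-\eta(\alpha-1)(v-\cte')}$ and $\psi'(v)=\eta\tgamma(v)$, the identity of $\Var_\mu(V)$ with the variance in \eqref{eq:mono:refined}, and the positivity $(\alpha-1)\bmuf(\theta)+1>0$.

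The gap is that you stop precisely at the step that constitutes the lemma. Having reduced everything to a lower bound on the product of the two increments, you prove no such bound: you only observe that replacing the increments by infima of $\phi'$ and $\psi'$ produces extra factors (the normalisation $\mu(e^{-\eta V})$ hidden in $\tgamma$, and the factor $\tgamma^{\alpha-1}$), and you conjecture that these can be eliminated by ``exploiting $\PE[1-\tgamma(V)]=0$'', with no computation supporting this claim — it is not at all obvious that this normalisation identity removes those factors. As written, your argument yields only $A_\alpha \geq 0$, which is already the content of \Cref{thm:monotone}, not \eqref{eq:mono:refined}. For comparison, the paper settles this step with exactly the bound you dismiss as naive: it writes the covariance as $\frac{1}{2}\PE\lrb{\frac{\phi(U)-\phi(V)}{U-V}\cdot\frac{-\tgamma(U)+\tgamma(V)}{U-V}(U-V)^2}$, bounds the two difference quotients from below by the infima over $\Domain^{AR}$ of (in effect) $1-\eta(\alpha-1)(v-\cte')$ and $\eta e^{-\eta v}$, and uses $\PE[(U-V)^2]=2\Var_\mu(V)$ to conclude with the constant $\ctemono$. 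Your observation that the factors $\tgamma^{\alpha-1}$ and the normalisation do not literally collapse into $\ctemono$ is a legitimate criticism of that bookkeeping, but a criticism is not a proof: to complete your argument you must either carry out the crude difference-quotient bound and explicitly control the extra factors (for instance by bounding $\mu(e^{-\eta V})$ above and below using the boundedness of $V$ on $\Domain^{AR}$, at the price of a modified constant), or actually construct the refined argument you allude to. Until one of these is done, the proposal does not establish \eqref{eq:mono:refined}.
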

\begin{proof} The proof of \Cref{lem:mono:refined} builds on the proof of \cite[Lemma 6]{daudel2020infinitedimensional}, which can be found in the supplementary material of \cite{daudel2020infinitedimensional}. Using \eqref{eq:usefulNextLemma} combined with the fact that under \ref{hyp:positive}, $(\alpha-1) (\mu(\bmuf) + \cte) + 1 > (\alpha-1) \cte > 0$
\begin{align*}
A_\alpha &= [(\alpha-1) (\mu(\bmuf) + \cte) + 1] \times \Cov( (V-\cte') \tgamma^{\alpha-1}(V), 1- \tgamma(V)) \\
& > (\alpha-1) \cte \times \Cov( (V-\cte') \tgamma^{\alpha-1}(V), 1- \tgamma(V))
\end{align*}
Furthermore,
\begin{align*}
\Cov( (V-\cte') & \tgamma^{\alpha-1}(V), 1- \tgamma(V)) \\
&= \frac{1}{2} \PE\left[((U-\cte')\tgamma^{\alpha-1}(U) - (V-\cte') \tgamma^{\alpha-1}(V))(-\tgamma(U) + \tgamma(V)) \right] \\
& = \frac{1}{2} \PE\left[\frac{(U-\cte') \tgamma^{\alpha-1}(U) - (V-\cte') \tgamma^{\alpha-1}(V)}{U - V}\frac{-\tgamma(U) + \tgamma(V)}{U-V} (U-V)^2 \right]\\
& \geq \frac{\ctemono}{2}\Var_\mu \lr{\frac{\bmuf + 1/(\alpha-1)}{(\alpha-1) (\mu(\bmuf)+\cte)+1}}
\end{align*}
and we thus obtain \eqref{eq:mono:refined}.
\end{proof}

\subsubsection{Adaptation of the proof of \cite[Theorem 2]{daudel2020infinitedimensional} to obtain \Cref{thm:Renyi}}

\begin{proof}[Proof of \Cref{thm:Renyi}] The proof of \Cref{thm:Renyi} builds on the proof of \cite[Theorem 2]{daudel2020infinitedimensional}, which can be found in the supplementary material of \cite{daudel2020infinitedimensional}. We prove the assertions successively.
\begin{enumerate}[label=(\roman*),wide=0pt, labelindent=\parindent]
 \item The proof of \ref{item:admiss1} simply consists in verifying that we can apply \Cref{thm:monotone}. For all $\mu \in \meas{1}(\Tset)$, \eqref{eq:admissRenyi} with $\mu = \mu_n$ holds for all $n \in \nstar$ by assumption on $\binfty$ and since at each step $n \in \nstar$, \Cref{thm:monotone} combined with $\Psif(\mu_n) < \infty$ implies that $\Psif(\mu_{n+1}) \leq \Psif(\mu_n) < \infty$, we obtain by induction that $(\Psif(\mu_n))_{n\in\nstar}$ is non-increasing.

 \item Let $n \in \nstar$, set $\Delta_n = \Psif(\mu_n) - \Psif(\mu^\star)$ and for all $\theta \in \Tset$,
 $V_n(\theta) = \frac{\bmuf[\mu_n](\theta) + \frac{1}{\alpha-1}}{(\alpha-1) (\mu_n(\bmuf[\mu_n]) + \cte) + 1} + \cte'$,
 such that $\rmd \mu_{n+1} \propto e^{- \eta V_n} \rmd \mu_n $.

  We first show that
\begin{align}\label{eq:delta_s}
\Delta_n \leq \cteinf \left[ \int_\Tset \log \left(\frac{\rmd \mu_{n+1}}{\rmd \mu_n} \right)\rmd \mu^\star  + \frac{L}{2} \Var_{\mu_n}(V_n) \ctesup \right] \eqsp.
\end{align}
The convexity of $\falpha$ implies that
\begin{align}\label{eq:rate1}
\Delta_n &\leq \int_\Tset \bmuf[\mu_n](\rmd \mu_n - \rmd \mu^\star) \\
&= \int_\Tset \lr{\bmuf[\mu_n] + \frac{1}{\alpha -1}}(\rmd \mu_n - \rmd \mu^\star) \nonumber \\
&= \frac{(\alpha-1)(\mu_n(\bmuf[\mu_n]) + \cte) + 1}{\eta} \int_\Tset (\mu_n(\eta V_n) - \eta V_n) \rmd \mu^\star \eqsp.
\end{align}
Then, noting that
$$
-\eta V_n = \log \mu_n \lr{e^{-\eta V_n}} + \log\lr{\frac{\rmd \mu_{n+1}}{\rmd \mu_n}}
$$
we deduce
\begin{align}\label{eq:rate3}
\Delta_n &\leq \cteinf \int_\Tset \left[\mu_n(\eta V_n) + \log \mu_n \lr{e^{-\eta V_n}} + \log\lr{\frac{\rmd \mu_{n+1}}{\rmd \mu_n}}  \right] \rmd \mu^\star \eqsp.
\end{align}
Since $v \mapsto e^{-\eta v}$ is $L$-smooth on $\Domain^{AR}$, for all $\theta \in \Tset$ and for all $n \in \nstar$ we can write
\begin{align*}
e^{- \eta V_n(\theta)} \leq e^{- \eta \mu_n(V_n)} + \eta e^{- \eta \mu_n(V_n)}(V_n(\theta) - \mu_n(V_n)) + \frac{L}{2} \left( V_n(\theta) - \mu_n(V_n) \right)^2
\end{align*}
which in turn implies
$$
\mu_n(e^{- \eta V_n}) \leq  e^{- \eta \mu_n(V_n)}  + \frac{L}{2} \Var_{\mu_n} \left( V_n \right) \eqsp.
$$
Finally, we obtain
$$
\log \mu_n(e^{- \eta V_n}) \leq \log e^{- \eta \mu_n(V_n)} + \log \left(1 +  \frac{L}{2} \frac{\Var_{\mu_n}(V_n) }{e^{- \eta \mu_n(V_n)}} \right) \eqsp.
$$
Using that $\log(1+u) \leq u$ when $u \geq 0$ and by definition of $\ctesup$, we deduce
$$
\log \mu_n(e^{- \eta V_n}) \leq - \eta \mu_n(V_n) + \frac{L}{2} \Var_{\mu_n}(V_n) \ctesup\eqsp,
$$
which combined with \eqref{eq:rate3} implies \eqref{eq:delta_s}. To conclude, we apply \Cref{lem:mono:refined} to $g = \frac{\rmd \mu_{n+1}}{\rmd \mu_n}$ and combining with \eqref{eq:delta_s}, we obtain
$$
\Delta_n \leq \cteinf \left[ \int_\Tset \log \left(\frac{\rmd \mu_{n+1}}{\rmd \mu_n} \right)\rmd \mu^\star  + \frac{L \ctesup }{\ctemono (\alpha-1)\cte} \left( \Delta_n - \Delta_{n+1} \right) \right] \eqsp,
$$
where by assumption $\ctemono$, $\cteinf$ and $\ctesup > 0$. As the r.h.s involves two telescopic sums, we deduce
\begin{align*}
\frac{1}{N} \sum_{n=1}^{N} \Psif(\mu_n) - \Psif(\mu^\star) \leq \frac{\cteinf}{N} \left[ KL\couple[\mu^\star][\mu_1] - KL\couple[\mu^\star][\mu_{N + 1}] + L\frac{ \ctesup}{\ctemono (\alpha-1)\cte} (\Delta_1 - \Delta_{N+1}) \right]
\end{align*}
and we recover \eqref{eq:rate} using \ref{item:admiss1}, that $KL\couple[\mu^\star][\mu_{N + 1}] \geq 0$ and that $\Delta_{N+1} \geq 0$.
\end{enumerate}
\end{proof}

\subsection{The Renyi Descent for mixture models: practical version}
\label{subsec:RDalgo}

The algorithm below provides one possible approximated version of the Renyi Descent algorithm, where we have set $\GammaAlpha(v) = e^{-\eta v}$ with $\eta > 0$.

\SetInd{0.6em}{-1.8em}
\begin{algorithm}[!h]
\caption{{\em Practical version of the Renyi Descent for mixture models}}
\label{algo:mixture:RD}
\textbf{Input:} $p$: measurable positive function, $K$: Markov transition kernel, $M$: number of samples, $\Theta = \{\theta_1, \ldots, \theta_J\} \subset \Tset$: parameter set, $\GammaAlpha(v) = e^{-\eta v}$ with $\eta > 0$, $N$: total number of iterations. \\
\textbf{Output:} Optimised weights $\lbd{}$. \\

Set $\lbd{} = [\lambda_{1, 1}, \ldots, \lambda_{J,1}]$.\\
\For{$n = 1 \ldots N$}{
\begin{enumerate}[label={}]
\item \underline{Sampling step} : \setlength{\parindent}{1cm} Draw independently $M$ samples $Y_{1}, \ldots, Y_{M}$ from $\mulbd k$.
\item \underline{Expectation step} : \setlength{\parindent}{1cm} Compute $\boldsymbol{B}_{\lbd{}} = (b'_{j})_{1 \leq j \leq J}$ where for all $j = 1 \ldots J$
\begin{align*}
b_{j} = \dfrac{1}{M} \sum \limits_{m=1}^M \frac{k(\theta_j, Y_{m})}{\mulbd k(Y_{m})}  \falpha'\left( \frac{\mulbd k(Y_{m})}{p(Y_{m})} \right)
\end{align*} 
and for all $j = 1 \ldots J$
$$
b'_j = \frac{ b_j}{(\alpha-1)(\sum_{\ell=1}^{J} b_\ell + \cte) +1} \eqsp
$$

\noindent and deduce $\boldsymbol{W}_{\lbd{}}  = (\lambda_j \GammaAlpha(b'_{j} + \cte'))_{1\leq j\leq J}$ and $w_{\lbd{}}  = \sum_{j=1}^J \lambda_j \GammaAlpha(b'_{j} + \cte')$.
\item \underline{Iteration step} : \setlength{\parindent}{1cm} Set
\begin{align*}
\lbd{} \leftarrow \frac{1}{w_{\lbd{}} } \boldsymbol{W}_{\lbd{}}
\end{align*}
\end{enumerate}
}
\end{algorithm}

\subsection{Alternative Exploration step in Algorithm \ref{algo:adaptive}}
\label{app:alter}

We present here several possible alternative choices of Exploration step in Algorithm \ref{algo:adaptive}, beyond the one we have made in \Cref{sec:numerical} and that is based on \cite{daudel2021monotonic}. Our goal here is not to discriminate between all of them, but to illustrate the generality of our approach.

\textbf{Gradient Descent.} One could use a Gradient Descent approach to optimise the mixture components parameters $\lrcb{\theta_{1, t+1}, \ldots, \theta_{J,t+1}}$ in the spirit of Renyi's $\alpha$-divergence gradient-based methods (e.g \cite{2015arXiv151103243H, 2016arXiv160202311L}) or $\alpha$-divergence gradient-based methods (e.g \cite{NIPS2017_6866, NIPS2017_14e422f0}).

\textbf{The particular case $\alpha \in [0,1)$.} Following \cite{daudel2021monotonic}, if we consider the specific case $\alpha \in [0,1)$ another possibility would be to set at time $t$: for all $j = 1 \ldots J$
\begin{align}\label{eq:updateEM}
\theta_{j,t+1} = \argmax_{\theta_j \in \Tset} \int_\Yset \respat[y] \log(k(\theta_{j}, y)) \nu(\rmd y)
\end{align}
where for all $y \in \Yset$,
\begin{align*}
\respat[y] = k(\theta_{j,t}, y) \lr{\frac{\mulbd{} k(y)}{  p(y)}}^{\alpha-1} \eqsp.
\end{align*}
Indeed, \cite{daudel2021monotonic} showed that the above update formulas for $\lrcb{\theta_{1,t+1}, \ldots, \theta_{J,t+1}}$ ensure a systematic decrease in the $\alpha$-divergence and they notably explained how these update formulas could even outperform typical Renyi's $\alpha$ / $\alpha$-divergence gradient-based approaches (we refer to \cite{daudel2021monotonic} for details).

Furthermore, in the particular case of $d$-dimensional Gaussian kernels with $k(\theta_{j,t}, y) = \mathcal{N}(y; m_{j,t}, \Sigma_{j,t})$ and where $\theta_{j,t} = (m_{j,t}, \Sigma_{j,t}) \in \Tset$ denotes the mean and covariance matrix of the $j$-th Gaussian component density, they obtained that the maximisation procedure \eqref{eq:updateEM} amounts to setting
\begin{align*}
\forall j = 1 \ldots J, \quad m_{j,t+1}&= \frac{\int_\Yset \respat[y] y ~  \nu(\rmd y)}{\int_\Yset \respat[y] \nu(\rmd y)} \\
\Sigma_{j,t+1} &= \frac{\int_\Yset  \respat[y] (y-m_{j,t})(y-m_{j,t})^T  \nu(\rmd y)}{ \int_\Yset  \respat[y] \nu(\rmd y)} \eqsp.
\end{align*}
These update formulas can then always be made feasible by resorting to Monte Carlo approximations and can be used as a valid Exploration step. If we were to focus on solely updating the means $(m_{j,t+1})_{1 \leq j \leq J}$, we could for example consider the Exploration step given by:
\begin{align*}
\forall j = 1 \ldots J, \quad \theta_{j, t+1} = m_{j,t+1}& = \frac{\sum_{m=1}^M \respa[Y' _{m}] \cdot Y' _{m} }{ \sum_{m=1}^M  \respa[Y' _{m}]}
\end{align*}
where the $M$ samples $(Y' _{m})_{1 \leq m \leq M}$ have been drawn independently from the proposal $\mu_{\lbd{}, \Theta}$ and where we have set
$$
\respa = \frac{k(\theta_{j,t}, y)}{\mu_{\lbd{}, \Theta} k(y)} \lr{\frac{\mu_{\lbd{}, \Theta} k(y)}{p(y)}}^{\alpha-1} \eqsp.
$$
We ran Algorithm \ref{algo:adaptive} over 100 replicates for this choice of Exploration step with $M \in \lrcb{100, 500}$ (and keeping the same target $p$, initial sampler $q_0$, and hyperparameters $N = 20$, $T=10$, $\eta = \eta_0/\sqrt{N}$ with $\eta_0 = 0.3$, $\alpha = 0.5$, $J = 100$, $\cte = 0.$ and $d = 16$ as those chosen in \Cref{sec:numerical}). The results when using the Power and the Renyi Descent as Exploitation steps can be visualised in the figure below.

\begin{figure}[ht!]
  \caption{Plotted is the average Variational Renyi bound for the Power Descent (PD) and the Renyi Descent (RD) in dimension $d = 16$ computed over 100 replicates with $\eta_0 = 0.3$ and $\alpha = 0.5$ and an increasing number of samples $M$.}
  \label{fig:ComparePDandRD2}
  \begin{center}
  \begin{tabular}{cc}
  \includegraphics[width=5.3cm]{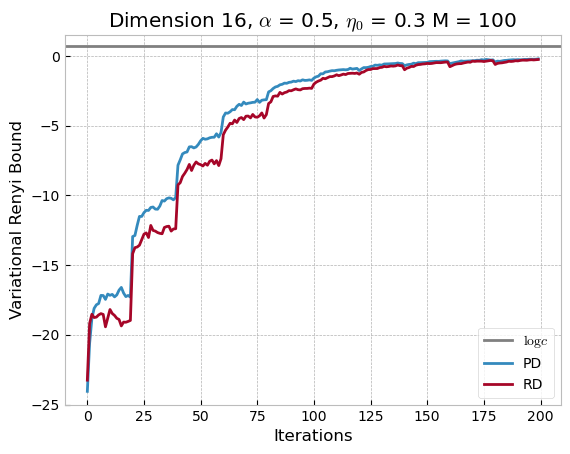} & \includegraphics[width=5.3cm]{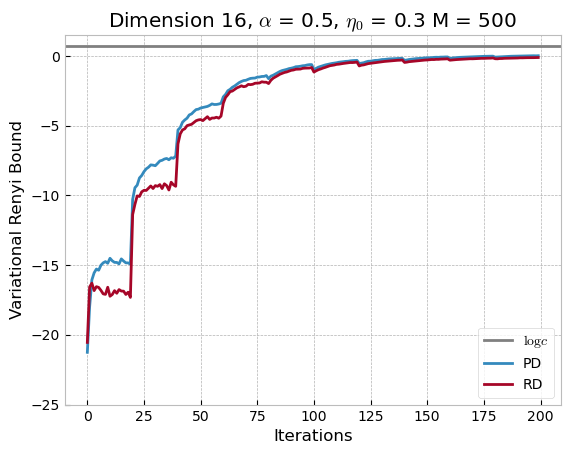}
  \end{tabular}
  \end{center}
\end{figure}

We then observe a similar behavior for the Power and the Renyi Descent, which illustrates the closeness between both algorithms, irrespective of the choice of the Exploration step. 

\end{document}